\documentclass[12pt]{amsart}

%
%

\usepackage[colorlinks=true, pdfstartview=FitV, linkcolor=blue, citecolor=blue, urlcolor=blue, breaklinks=true]{hyperref}
\usepackage{amsmath,amsfonts,amssymb,amsthm,amscd,comment,paralist,stmaryrd,etoolbox,mathtools}
\usepackage[usenames]{color}
\usepackage{mdwlist}

\usepackage[all]{xy}
\usepackage{tikz}

%
%
\newtheorem{theo}{Theorem}[section]
\newtheorem*{theo*}{Theorem}
\newtheorem{prop}[theo]{Proposition}
\newtheorem{lem}[theo]{Lemma}
\newtheorem{cor}[theo]{Corollary}

\theoremstyle{definition}
\newtheorem{defn}[theo]{Definition}
\newtheorem{rem}[theo]{Remark}
\newtheorem{remark}[theo]{Remark}
\newtheorem{example}[theo]{Example}

%
%

\leftmargin=0in
\topmargin=0pt 
\headheight=0pt 
\oddsidemargin=0in 
\evensidemargin=0in 
\textheight=8.75in 
\textwidth=6.5in 
\parindent=0.5cm
\headsep=0.25in 
\widowpenalty=1000

%
%

\DeclareMathOperator{\Hom}{Hom}
\DeclareMathOperator{\coker}{coker}
\DeclareMathOperator{\Rep}{rep}
\DeclareMathOperator{\im}{im}
\DeclareMathOperator{\Mat}{Mat}

\DeclareMathOperator{\rep}{rep}
\DeclareMathOperator{\wtrep}{wt-rep}
\DeclareMathOperator{\id}{id}
\DeclareMathOperator{\Md}{-Mod}

%
%

\newcommand{\R}{\mathbb{R}}
\newcommand{\Q}{\mathbb{Q}}
\newcommand{\N}{\mathbb{N}}
\newcommand{\C}{\mathbb{C}}
\newcommand{\Z}{\mathbb{Z}}

\newcommand{\ml}{\mathcal}
\newcommand{\mk}{\mathfrak}


\begin{document}

\title{Quivers and Three Dimensional Lie Algebras}
\author{Jeffrey Pike}
\address{Department of Mathematics and Statistics, University of Ottawa, Ottawa, Ontario K1N 6N5, Canada}
\email{jpike061@uottawa.ca}

\thanks{The author was supported by the NSERC Discovery Grant of Alistair Savage}

\begin{abstract}
We study a family of three-dimensional Lie algebras $L_\mu$ that depend on a continuous parameter $\mu$.  We introduce certain quivers, which we denote by $Q_{m,n}$ $(m,n \in \Z)$ and $Q_{\infty \times \infty}$, and prove that idempotented versions of the enveloping algebras of the Lie algebras $L_{\mu}$ are isomorphic to the path algebras of these quivers modulo certain ideals in the case that $\mu$ is rational and non-rational, respectively. We then show how the representation theory of the quivers $Q_{m,n}$ and $Q_{\infty\times\infty}$ can be related to the representation theory of quivers of affine type $A$, and use this relationship to study representations of the Lie algebras $L_\mu$.  In particular, though it is known that the Lie algebras $L_\mu$ are of wild representation type, we show that if we impose certain restrictions on weight decompositions, we obtain full subcategories of the category of representations of $L_\mu$ that are of finite or tame representation type.
\end{abstract}

\subjclass[2010]{Primary: 17B10, 16G20, Secondary: 22E47}

\keywords{Lie algebra, quiver, path algebra, preprojective algebra, representation}

\maketitle
\thispagestyle{empty}
\tableofcontents
\numberwithin{equation}{section}


\section*{Introduction}
In the late 20$^{\text{th}}$ century, the mathematician Pierre Gabriel discovered a beautiful relationship between the root systems of Lie algebras and the representations of \emph{quivers} \cite{Gabriel}, which are directed graphs.  This result fuelled further research of the connection between Lie theory and quivers, as the correspondence between the two allows for a more intuitive and geometric means of studying Lie algebras.  In the present work, we study the representation theory of a particular collection of Lie algebras by first relating them to certain quivers.

When working over the complex numbers, one can completely classify all Lie algebras of dimension three, up to isomorphism.  The possibilities are (see for example \cite[Chapter 3]{ErdLie}):
\begin{enumerate}
\item the three-dimensional abelian Lie algebra,
  \item the direct sum of the unique nonabelian two-dimensional Lie algebra with the one-dimensional Lie algebra,
  \item the Heisenberg algebra,
  \item the Lie algebra $\mk{sl}_2(\C)$,
    \item the Lie algebra with basis $\{x,y,z\}$ and commutation relations $[x,y]=y$, $[x,z]=y+z$, $[y,z]=0$.
  \item the Lie algebras $L_{\mu}$, $\mu \in \C^*=\C\setminus\{0\}$ (see Section \ref{liealg}).
\end{enumerate}
The representation theory of the first four Lie algebras in the above list is well understood, while the representation theory of the last two is less so.  In particular, for generic $\mu$, very little is known about the representation theory of the Lie algebras $L_{\mu}$, and so in order to obtain a better understanding of the representation theory of three-dimensional Lie algebras over the complex numbers, we first require a better understanding of this particular family.  Due to a result of Makedonskyi \cite[Theorem 3]{Mak}, it is known that the Lie algebras $L_{\mu}$ are of wild representation type.  However, in the current paper, we will exploit the relationship between quivers and Lie algebras, to draw several conclusions about the representations of these Lie algebras.

The \emph{Euclidean group} is the group of isometries of $\R^2$ having determinant 1, and the \emph{Euclidean algebra} is the complexification of its Lie algebra.  The Euclidean group is one of the oldest and most studied examples of a group: it was studied implicitly even before the notion of a group was formalized, and it has applications not only throughout mathematics but in quantum mechanics, relativity, and other areas of physics as well.  In \cite[Theorem 4.1]{SavEuc}, it is shown that the category of representations of the Euclidean algebra admitting weight space decompositions is equivalent to the category of representations of the preprojective algebras of quivers of type $A_{\infty}$.   In the current paper, we show that the category of representations of $L_{\frac{m}{n}}$ ($m,n \in \Z$, $n\neq 0$) admitting weight space decompositions can be embedded inside the category of representations of the preprojective algebra of the affine quiver of type $A^{(1)}_{m+n}$, where by convention $A^{(1)}_{0}$ denotes the quiver of type $A_{\infty}$.  It can be shown that the Euclidean algebra is isomorphic to the Lie algebra $L_{-1}$, and so if $\mu =-1$ then $m=-n$ so that this agrees with what is presented in \cite{SavEuc}.  Thus the current work can be thought of as a generalization of some of the results of that paper.  Analogous to the rational case, we also show how the representations with weight space decompositions of $L_\mu$, $\mu \in \C\setminus \Q$, form a subcategory of the category of representations of the preprojective algebra of the quiver of type $A_{\infty}$.

We begin by defining the \emph{modified enveloping algebras} of the Lie algebras $L_\mu$, denoted $\widetilde U_\mu$, and we note that category of representations of $\widetilde U_\mu$ is equivalent to the category of representations of $L_\mu$ admitting weight space decompositions.  We then introduce certain quivers, denoted $Q_{m,n}$ $(m,n\in\Z)$ and $Q_{\infty\times\infty}$, and show that the algebras $\widetilde U_\mu$ are isomorphic to the path algebras of $Q_{m,n}$ and $Q_{\infty\times\infty}$ modulo certain ideals in the case that $\mu$ is rational and non-rational, respectively (see Proposition \ref{ratiso}).  We use the theory of quiver morphisms to relate the representation theory of the quivers $Q_{m,n}$ and $Q_{\infty\times\infty}$ to the representation theory of affine quivers of type $A$, which is well understood.  In the case that $\mu \in \Q$, the main result is the following (Theorems \ref{m=1} and \ref{mneq1}):
\begin{theo*}
Let $m,n \in \Z$, $\gcd(m,n)=1$, and let $a,b \in \Z$ be such that $0\leq b-a<m$.  Let $\ml C^{m,n}_{a,b}$ denote the full subcategory of $\widetilde{U}_{\mu}\Md$ consisting of modules $V$ such that $V_k=0$ whenever $k<a$ or $k>b$, where $V_k$ denotes the $k^{\text{th}}$ weight space of $V$.  Then $\ml C^{m,n}_{a,b}$ is of finite representation type when $n\neq 1$, and $\ml C^{m,1}_{a,b}$ is of tame representation type.  
\end{theo*}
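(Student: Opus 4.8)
The plan is to use Proposition~\ref{ratiso} to turn the statement into a question about finite-dimensional algebras, and then to push that question across the quiver morphisms relating $Q_{m,n}$ to affine type~$A$ quivers, where representation type is governed by the classical results of Gabriel (Dynkin case) and Nazarova and Donovan--Freislich (Euclidean case).

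First I would, via Proposition~\ref{ratiso}, identify $\widetilde U_\mu\Md$ with the category of representations of $kQ_{m,n}/I$, under which $\mathcal C^{m,n}_{a,b}$ becomes the full subcategory of representations supported on the finite set of vertices $\{a, a+1, \dots, b\}$. Such representations are annihilated by every idempotent $1_k$ with $k\notin[a,b]$, so the action factors through the quotient algebra $\Lambda := (kQ_{m,n}/I)\big/\langle 1_k : k\notin[a,b]\rangle$, which is the path algebra of the full subquiver of $Q_{m,n}$ on $[a,b]$ modulo the induced relations. This yields an equivalence $\mathcal C^{m,n}_{a,b}\simeq \Lambda\Md$, so it suffices to determine the representation type of $\Lambda$.

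The heart of the argument is to identify $\Lambda$ using the quiver morphism $Q_{m,n}\to A^{(1)}_{m+n}$. The hypothesis $0\le b-a<m$ is exactly the condition that the window $[a,b]$ maps into the cyclic quiver $A^{(1)}_{m+n}$ as a proper arc, rather than wrapping all the way around the cycle; this is what keeps $\Lambda$ out of the wild range. I expect that for $n\neq 1$ this arc is a genuine Dynkin segment of type $A$, so that $\Lambda$ is representation-finite and one may list its indecomposables by the positive roots of the corresponding $A$-type root system, giving finite representation type. For $n=1$, on the other hand, the window is wide enough relative to $m+n$ that the arc closes up into a full Euclidean cycle of type $A^{(1)}$, and the known tame classification in affine type~$A$ then shows $\mathcal C^{m,1}_{a,b}$ is tame.

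The main obstacle will be the identification of $\Lambda$ and the precise placement of the finite/tame boundary at $n=1$. Two points require care. First, one must track exactly how the relations in $I$ restrict to the window, since together with the value of $n$ they decide whether the truncated quiver behaves like a linear type-$A$ configuration or like a closed type-$A^{(1)}$ cycle. Second, in the boundary case one must rule out wildness, verifying that closing the arc produces precisely a tame affine algebra and never a wild one. I would treat the two regimes separately (cf.\ Theorems~\ref{m=1} and~\ref{mneq1}), in each case giving an explicit description of the indecomposable objects of $\mathcal C^{m,n}_{a,b}$ to make the representation-type conclusion rigorous.
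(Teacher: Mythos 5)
Your opening reduction --- truncating to the finite-dimensional algebra $\Lambda$ supported on the window $[a,b]$, so that $\ml C^{m,n}_{a,b}\simeq \Lambda\Md$ --- is exactly how the paper's proofs of Theorems \ref{m=1} and \ref{mneq1} begin, and your endgame (Gabriel's theorem in the Dynkin case, the Euclidean classification in the affine case) is also the paper's. But the paper bridges these two steps without any use of the covering morphism $g\colon Q_{m,n}\to \widehat Q_{m+n}$ of Example \ref{morphg}: it observes that each relation $\rho_1^{k+n}\rho_2^k-\rho_2^{k+m}\rho_1^k$ joins the vertices $k$ and $k+m+n$, which cannot both lie in a window of width at most $m$, so every window-supported representation satisfies the relations \emph{vacuously}; hence $\Lambda$ is the hereditary path algebra of the full subquiver of $Q_{m,n}$ on $[a,b]$, with no relations at all. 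One then checks this subquiver directly: at most one $\rho_1$-arrow (namely $a\to a+m$, present only if $b-a=m$) fits in the window, and since the $\rho_2$-arrows jump by $n$ with $\gcd(m,n)=1$, they can close a cycle with that single $\rho_1$-arrow only when $n=1$; so the underlying graph is a disjoint union of type $A$ Dynkin segments for $n\neq 1$, and the cycle $\hat A_m$ when $n=1$ and $b-a=m$.

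Two concrete problems with your middle step. First, the claim that $0\le b-a<m$ is ``exactly the condition that the window maps to a proper arc'' of the cycle is false: the vertex map of $g$ sends $k$ to $j_k$ with $j_k\equiv km^{-1} \pmod{m+n}$, so consecutive window vertices land $m^{-1}$ apart on the cycle and the image is in general not even connected (for $m=3$, $n=2$ the window $\{0,1,2\}$ maps to $\{0,2,4\}\subset\Z/5\Z$); moreover injectivity only requires $b-a<m+n$. What the width hypothesis actually buys is (i) vacuousness of the relations and (ii) at most one $\rho_1$-arrow in the window. Relatedly, you cannot transport representation type through $g_!$ itself: the target $\rep(\widehat Q_{m+n},\widehat R_{m+n})$ is the module category of a preprojective algebra of affine type, which is wild, and the paper shows $g_!^{\widehat R_{m+n}}$ is faithful but neither full nor essentially surjective --- so the morphism can serve only as bookkeeping, and one may as well inspect the window subquiver directly. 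Second, your description of the $n=1$ case contradicts your own setup: under the strict inequality $b-a<m$ no $\rho_1$-arrow fits, the subquiver is a linear type $A$ path, and $\ml C^{m,1}_{a,b}$ is of \emph{finite} type; the Euclidean cycle closes only when $b-a=m$. (This exposes a discrepancy in the paper itself: the boxed statement says $0\le b-a<m$, whereas Theorem \ref{m=1} assumes $0\le b-a\le m$, and genuinely needs the equality case for tameness.)
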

When $\mu \in \C\setminus \Q$, we introduce a $\Z$ action on $\widetilde U_\mu \Md$ and our main result is the following (Corollary \ref{irratreps}):
\begin{theo*}
Let $A$ be a finite subset of $\Z$ with the property that $A$ does not contain any five consecutive integers.  Then there are a finite number of $\Z$-orbits of isomorphism classes of indecomposable $\widetilde U_\mu$-modules $V$ such that $V_{ij}=0$ whenever $i-j\notin A$, where $V_{ij}$ denotes the $(i,j)$ weight space of $V$.  
\end{theo*}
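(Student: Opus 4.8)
The plan is to reduce the statement to a finiteness result for the preprojective algebra of type $A$, by means of the $\Z$-covering relating $\widetilde U_\mu\Md$ to modules over the preprojective algebra of $A_\infty$ that is set up above. Write $\Pi_\infty$ for the preprojective algebra of the quiver of type $A_\infty$, with vertices indexed by $\Z$. The quiver morphism $Q_{\infty\times\infty}\to A_\infty$, $(i,j)\mapsto i-j$, is a Galois covering whose deck group is exactly the $\Z$ that acts on $\widetilde U_\mu\Md$, and the induced push-down functor sends a module $V$ with weight spaces $V_{ij}$ to the $\Pi_\infty$-module $W$ with $W_k=\bigoplus_{i-j=k}V_{ij}$, the relation $[y,z]=0$ becoming precisely the preprojective (mesh) relation at each vertex. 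First I would invoke the covering-theoretic dictionary: since $\Z$ acts freely and every module in sight is finite-dimensional (hence supported on finitely many vertices), the push-down induces a bijection between $\Z$-orbits of finite-dimensional indecomposable $\widetilde U_\mu$-modules and finite-dimensional indecomposable $\Pi_\infty$-modules, under which the condition $V_{ij}=0$ for $i-j\notin A$ becomes $W_k=0$ for $k\notin A$. It therefore suffices to prove that $\Pi_\infty$ has only finitely many indecomposable modules supported on the finite set $A$.

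Second, I would exploit the hypothesis that $A$ contains no five consecutive integers by decomposing along the gaps of $A$. Write $A$ as the disjoint union of its maximal runs $R_1,\dots,R_t$ of consecutive integers; by hypothesis each $R_s$ has length $\ell_s\le 4$. If $W$ is a $\Pi_\infty$-module supported on $A$ and $k\notin A$ separates two runs, then $W_k=0$ forces both arrows incident to $k$ to vanish, so $W$ splits as the direct sum of its restrictions to the runs. Hence every indecomposable $\Pi_\infty$-module supported on $A$ is supported on a single run $R_s$. Restricting the quiver and its relations to the $\ell_s$ vertices of $R_s$, and noting that the vanishing of $W$ just outside $R_s$ forces the boundary nilpotency relations, such a module is exactly a module over the preprojective algebra $\Pi(A_{\ell_s})$ of the finite Dynkin quiver $A_{\ell_s}$.

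Finally, I would invoke the classical fact that $\Pi(A_\ell)$ is of finite representation type precisely when $\ell\le 4$ — this is exactly what makes the bound \emph{five consecutive} the correct threshold. Since each $\ell_s\le 4$ and $A$ has only finitely many runs, there are finitely many indecomposable $\Pi_\infty$-modules supported on $A$ altogether, and by the bijection of the first step this yields finitely many $\Z$-orbits of indecomposable $\widetilde U_\mu$-modules $V$ with $V_{ij}=0$ for $i-j\notin A$, as required. The main obstacle is the first step: one must verify that the push-down genuinely realizes each $\Z$-orbit of indecomposables upstairs as a single indecomposable downstairs and is dense onto those supported on $A$, i.e.\ that every such indecomposable $\Pi_\infty$-module lifts and that two lifts differ by the $\Z$-action. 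This rests on the covering being free together with the absence of monodromy for the simply connected (tree) quiver $A_\infty$; the gap-decomposition and the representation-finiteness of $\Pi(A_{\le 4})$ are then the essential but more routine remaining ingredients, the latter being available from the single-interval analysis underlying Theorems \ref{m=1} and \ref{mneq1}.
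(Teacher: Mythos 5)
Your overall architecture --- push down along $f\colon Q_{\infty\times\infty}\to Q_\infty$, split along the gaps of $A$, and invoke representation-finiteness of the preprojective algebra $\Pi(A_\ell)$ for $\ell\le 4$ --- is the same as the paper's, and your second and third steps are essentially correct (the paper obtains the downstairs finiteness from \cite[Theorem 4.3]{SavEuc}; note it does \emph{not} follow from Theorems \ref{m=1} and \ref{mneq1}, which belong to the rational case and concern path algebras of Dynkin and affine type $A$ quivers with no preprojective relations). The genuine gap is your first step. The claim that $f_!$ induces a bijection between $\Z$-orbits of \emph{all} finite-dimensional indecomposables upstairs and \emph{all} finite-dimensional indecomposables downstairs is exactly the delicate point, and the justification you offer is wrong: the object being covered is not the tree $A_\infty$ but its double $Q_\infty$ with the relations $R_\infty$, and this quiver with relations is \emph{not} simply connected --- its fundamental group is precisely the deck group $\Z$. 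Concretely, the closed walk $\bar\rho_{k+1}\rho_k$ at a vertex $k$ lifts to a walk from $(i,j)$ to $(i+1,j+1)$, so there is genuine monodromy; this is why the covering is connected and nontrivial in the first place. Liftability of a representation of $(Q_\infty,R_\infty)$ is therefore a real condition (it amounts to gradability with respect to the grading $\deg\rho_i=0$, $\deg\bar\rho_i=1$), and the paper's own example $(V(i),V(\rho_i),V(\bar\rho_i))=(\C,\lambda,1)$ exhibits a representation that does not lift. Density of push-down functors along $\Z$-Galois coverings is a theorem only under strong hypotheses, e.g.\ local representation-finiteness of the cover in Gabriel's covering theory; that hypothesis fails here, since the band $\{(i,j)\ |\ -2\le i-j\le 2\}$ already supports the one-parameter family $V_\lambda$ constructed in the paper. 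So your blanket claim is unsupported in exactly the range ($\Pi(A_n)$ tame for $n=5$, wild for $n\ge 6$) where it would be doing work.

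The repair is to make the lifting statement local and prove it only where it is true: this is Lemma \ref{surjon5}, whose proof is not formal covering theory but an appeal to a nontrivial theorem of Gei\ss--Leclerc--Schr\"oer \cite[Lemma 9.1]{GLS}, valid precisely for preprojective algebras of type $A_n$ with $n\le 5$. Run your argument in the other order: first note that an indecomposable $\widetilde U_\mu$-module with $V_{ij}=0$ for $i-j\notin A$ has connected support, hence is supported on a single band $\{(i,j)\ |\ i-j\in R_s\}$ with $|R_s|\le 4$; then, for modules supported on at most five consecutive values of $i-j$, Lemma \ref{surjon5} gives existence and uniqueness (up to the $\Z$-action) of lifts, so the number of $\Z$-orbits upstairs over the run $R_s$ is bounded by the number of indecomposables of $\Pi(A_{|R_s|})$, which is finite since $|R_s|\le 4$; finally sum over the finitely many runs of $A$. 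In other words, the hypothesis that $A$ contains no five consecutive integers does double duty: it keeps each run inside the range $\ell\le 4$ where $\Pi(A_\ell)$ is representation-finite, \emph{and} inside the range of at most five vertices where the covering-theoretic transfer is known to hold at all. Your proposal accounts only for the first of these uses and treats the second as automatic, which it is not.
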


The organization of the paper is as follows.  In Section \ref{liealg} we introduce the family $L_{\mu}$ of 3-dimensional Lie algebras that will be studied in the rest of the paper.  We then describe the (modified) universal enveloping algebras $\widetilde U_{m,n}$ and $\widetilde U_{\mu}$ associated with the Lie algebras $L_{\mu}$.  In Section \ref{quiv} we recall some basic notions from the theory of quivers and their representations, and in Section \ref{quimorph} we develop the theory of morphisms between quivers.  Finally, in Section \ref{application}, we establish a relationship between the representation theory of the Lie algebras $L_{\mu}$ and the representation theory of the quivers $Q_{m,n}$ and $Q_{\infty\times\infty}$, and use some of the results of Section \ref{quimorph} to study these quivers.

\bigskip

\textbf{Acknowledgements.} The research for this paper began in the Summer of 2012 at the University of Ottawa under the supervision of Alistair Savage.  We would like to thank him not only for introducing us to the topics of this paper, but also for all the interesting discussions and for his insightful guidance over the course of this project.

\bigskip

\textbf{Notation.}  Throughout this paper, all vector spaces and linear maps will be over $\C$.  Given a $\C$-algebra $A$, we will take the term \emph{module over $A$} to mean \emph{left} module over $A$.  The category of (left) modules over $A$ is denoted $A\Md$.  By the usual abuse of language, we will use the terms module and representation interchangeably.


\section{The Lie Algebras $L_{\mu}$}\label{liealg}
In this section we introduce the reader to the family of three-dimensional Lie algebras that will be the main focus of this paper.  Given a complex number $z \in \C$, we will denote by $z_R$ the real part of $z$ and by $z_I$ the imaginary part.

Let $\mu \in \C^*=\C\setminus\{0\}$ and let $L_\mu$ denote the three-dimensional Lie algebra with basis $\{\alpha_1,\alpha_2,\beta\}$ and commutation relations
\begin{equation}\label{relations}
  [\beta,\alpha_1]=\alpha_1,\quad \quad [\beta,\alpha_2]=\mu \alpha_2, \quad \quad [\alpha_1,\alpha_2]=0.
\end{equation}
It is straightforward to check that $L_\mu \cong L_\nu$ if and only if $\mu=\nu$ or $\mu=\nu^{-1}$.  We will separate our discussion of this family of Lie algebras into two cases: $\mu \in \Q^*$ and $\mu \in \C\backslash\Q$. 

Let $\mu \in \Q^*$, say $\mu = \frac{m}{n}$ for $m,n\in \Z$ with $n\neq 0$ and $\gcd(m,n)=1$.  We will change bases so that the commutation relations become
\begin{equation}\label{relationsq}
  [\beta,\alpha_1]=m\alpha_1,\quad \quad [\beta,\alpha_2]=n\alpha_2, \quad \quad [\alpha_1,\alpha_2]=0.
\end{equation}
Then for any indecomposable $L_{\mu}$-module $V$ in which $\beta$ acts semisimply, $\beta$ will act with eigenvalues from a set of the form $\gamma + \Z$ for some $\gamma \in \C$ with $0\leq \gamma_R <1$.  We will write $V_{\lambda}$ to represent the eigenspace of $\beta$ with eigenvalue $\lambda$. This gives the following eigenspace decomposition:
\begin{equation}\label{weightspace1} \textstyle
V=\bigoplus_{k\in\Z}V_{ k}, \qquad V_{k}=\{v \in V \ |\ \beta \cdot v=(\gamma+k) v\}.
\end{equation}

Let $U_{m,n}$\label{umn} be the universal enveloping algebra of $L_\mu$ and let $U^0, U^1, U^2$ be the subalgebras generated by $\beta, \alpha_1, \alpha_2$ respectively. Then, by the PBW Theorem, we have
\begin{equation} \label{univalg}
U_{m,n} \cong U^1 \otimes U^0 \otimes U^2 \quad (\text{as vector spaces}).
\end{equation}

From \eqref{relationsq} we obtain the following relations in the universal enveloping algebra:
\begin{equation}
\beta\alpha_1-\alpha_1\beta=m\alpha_1, \qquad \beta\alpha_2-\alpha_2\beta=n\alpha_2, \qquad \alpha_1\alpha_2=\alpha_2\alpha_1 \label{Urel}.
\end{equation}

Following \cite[Section 2]{SavEuc} we will consider the \index{modified enveloping algebra}modified enveloping algebra $\widetilde{U}_{m,n}$ of $L_\mu$ by replacing $U^0$ with a sum of 1-dimensional algebras:
\begin{equation} \textstyle \label{modenv}
\widetilde{U}_{m,n}=U^1 \otimes \left( \bigoplus_{k \in \Z} \C a_{k} \right) \otimes U^2.
\end{equation}

Multiplication in the modified enveloping algebra is given by
\begin{align}
a_ka_{\ell}&=\delta_{k\ell}a_k, \nonumber \\
\alpha_1a_k=a_{k+m}\alpha_1 &, \quad \alpha_2a_k=a_{k+n}\alpha_2,\label{modrel} \\
\alpha_1\alpha_2a_k&=\alpha_2\alpha_1a_k , \nonumber
\end{align}
where $k, \ell \in \Z$.  The element $a_k$ is an idempotent that will act on any module $V$ as projection onto the $k$-th weight space $V_k$.  For $\gamma \in \C$ with $0\leq \gamma_R <1$, denote by $\wtrep_\gamma(U_{m,n})$ the category of representations of $U_{m,n}$ on which $\beta$ acts semisimply with eigenvalues from the set $\gamma +\Z$.  Then we have an equivalence of categories $\wtrep_\gamma(U_{m,n})\cong \rep(\widetilde U_{m,n})$.  

When $\mu \in \C \setminus \Q$, $\beta$ will act on indecomposable $L_{\mu}$-modules $V$ admitting weight space decompositions with eigenvalues of the form $\gamma +k$ for some $\gamma \in \C$ with $0\leq \gamma_R < \min(1,\mu_R)$ and $0\leq \gamma_I <\mu_I$, where $k \in \Z+\Z\mu$. Therefore, we have the weight space decomposition
\begin{equation}\label{weightspace2}
V=\bigoplus_{i+j\mu \in\Z+\Z\mu}V_{ij}, \quad V_{ij}=\{v \in V \ |\ \beta \cdot v=(\gamma+i+j\mu) v\}.
\end{equation}

When $\mu \in \C \setminus \Q$, we get the same decomposition of the universal enveloping algebra as found in \eqref{univalg}. However, the relations found in \eqref{Urel} become
\begin{equation}
\beta\alpha_1-\alpha_1\beta=\alpha_1, \qquad \beta\alpha_2-\alpha_2\beta=\mu\alpha_2, \qquad \alpha_1\alpha_2=\alpha_2\alpha_1 \label{Urelira}.
\end{equation}

Again we consider the \index{modified enveloping algebra}modified enveloping algebra, $\widetilde U_{\mu}$, in this case given by
\begin{equation} \textstyle \label{modenv2}
\widetilde{U}_{\mu}=U^1 \otimes \left( \bigoplus_{k \in \Z + \mu \Z} \C a_{k} \right) \otimes U^2.
\end{equation}

Multiplication in this modified enveloping algebra is given by
\begin{align}
a_ka_{\ell}&=\delta_{k\ell}a_k, \nonumber \\
\alpha_1a_k=a_{k+1}\alpha_1 &, \quad \alpha_2a_k=a_{k+\mu}\alpha_2,\label{modrelirra} \\
\alpha_1\alpha_2a_k&=\alpha_2\alpha_1a_k , \nonumber
\end{align}
where $k, \ell \in \Z + \mu \Z$.  Since $\mu \in \C \setminus \Q$, we have $\Z + \mu \Z \cong \Z \times \Z$, so we can reindex the projections $a_k$ by defining $a_{ij}=a_{i+j\mu}$.  In this notation the modified enveloping algebra has the form
\begin{equation}\textstyle \label{modenv3}
\widetilde{U}_{\mu}=U^1 \otimes \left( \bigoplus_{i,j \in \Z}  \C a_{ij}  \right) \otimes U^2.
\end{equation}
The multiplication is given by:
\begin{align}
a_{ij}a_{st}&=
\begin{cases}
 a_{ij}, & \text{if }i=s, j=t, \\
0, & \text{otherwise,}
\end{cases}
\nonumber \\
\alpha_1a_{ij}=a_{(i+1)j}\alpha_1 &, \quad \alpha_2a_{ij}=a_{i(j+1)}\alpha_2,\label{modrelirra2} \\
\alpha_1\alpha_2a_{ij}&=\alpha_2\alpha_1a_{ij} \nonumber.
\end{align}

\begin{rem}\label{remark1}
The importance of this reindexing is that we have eliminated any dependence on $\mu$. This shows that when $\mu \in \C \setminus \Q$ the modified enveloping algebras of all the Lie algebras $L_{\mu}$ are isomorphic.
\end{rem}

Let $U_\mu$ denote the universal enveloping algebra of $L_\mu$, and for any $\gamma \in \C$ with $0\leq \gamma_R < \min(1,\mu_R)$ and $0\leq \gamma_I <\mu_I$, denote by $\wtrep_\gamma(U_\mu)$ the category of representations of $U_{\mu}$ on which $\beta$ acts semisimply with eigenvalues from the set $\gamma +\Z+\Z\mu$.  Then we have an equivalence of categories $\wtrep(U_{\mu}) \cong \rep(\widetilde U_\mu)$.  Note that the categories of weight representations of $L_{\mu}$ are all equivalent whenever $\mu \in \C \setminus \Q$ (see Remark \ref{remark1}).


\section{Quivers and the Path Algebra}\label{quiv}

In this section we recall some basic notions related to quivers.  We introduce quivers, the path algebra of a quiver, and representations of quiver, and mention the equivalence of categories between modules over the path algebra and representations of a quiver.

A \emph{quiver} $Q$ is a 4-tuple $(X,A,t,h)$, where $X$ and $A$ are sets, and $t$ and $h$ are functions from $A$ to $X$.  The sets $X$ and $A$ are called the \emph{vertex} and \emph{arrow} sets respectively.  If $\rho \in A$, we call $t(\rho)$ the \emph{tail} of $\rho$, and $h(\rho)$ the \emph{head}.  We can think of an element $\rho \in A$ as an arrow from the vertex $t(\rho)$ to the vertex $h(\rho)$.  We will often denote a quiver simply by $Q=(X,A)$, leaving the maps $t$ and $h$ implied.
\begin{example}[The quiver $Q_{m,n}$]\label{qmn}
Let $m,n \in \Z$ be nonzero integers and consider the quiver $Q_{m,n}=(\Z,A^{m,n})$ where $A^{m,n}=\{\rho_{j}^{k}\ |\ k \in \Z, \, j=1,2 \}$ is the arrow set. We define a map $\sigma:\{1,2\} \to \{m,n\}$ such that $\sigma(1)=m$ and $\sigma(2)=n$. Then $\rho_{j}^{k}$ is an arrow whose tail, $t(\rho_{j}^{k})$, is the vertex $k$ and whose head, $h(\rho_{j}^{k})$, is the vertex $k+\sigma(j)$.  If $\gcd(m,n) \neq 1$, then the quiver $Q_{m,n}$ decomposes into a disjoint union of quivers of the form $Q_{m',n'}$, where $\gcd(m',n')=1$.  Thus we may assume when dealing with the quivers $Q_{m,n}$ that $\gcd(m,n)=1$.
\begin{figure}[h]
\[
\begin{xy}
\POS(0,0) *\cir<3pt>{}="0"
\POS(10,0) *\cir<3pt>{}="1"
\POS(25,0) *\cir<3pt>{}="m"
\POS(35,0) *\cir<3pt>{}="m+1"
\POS(50,0) *\cir<3pt>{}="n"
\POS(60,0) *\cir<3pt>{}="n+1"
\POS(-25,0) *\cir<3pt>{}="-m"
\POS(-15,0) *\cir<3pt>{}="1-m"
\POS(-50,0) *\cir<3pt>{}="-n"
\POS(-40,0) *\cir<3pt>{}="1-n"
\ar @/^2pc/ "0";"m" <0pt>
\ar @/^2pc/ "1";"m+1" <0pt>
\ar @/_4pc/ "0";"n" <0pt>
\ar @/_4pc/ "1";"n+1" <0pt>
\ar @/^2pc/ "-m";"0" <0pt>
\ar @/^2pc/ "1-m";"1" <0pt>
\ar @/_4pc/ "-n";"0" <0pt>
\ar @/_4pc/ "1-n";"1" <0pt>
\POS(0,-5) *+{0}
\POS(10,-5) *+{1}
\POS(25,-5) *+{m}
\POS(35,-5) *+{m+1}
\POS(50,-5) *+{n}
\POS(60,-5) *+{n+1}
\POS(-25,-5) *+{-m}
\POS(-15,-5) *+{1-m}
\POS(-40,-5) *+{1-n}
\POS(-50,-5) *+{-n}
\POS(17.5,0) *+{\cdots}
\POS(42.5,0) *+{\cdots}
\POS(-7.5,0) *+{\cdots}
\POS(-32.5,0) *+{\cdots}
\POS(65,0) *+{\cdots}
\POS(-55,0) *+{\cdots}
\end{xy}
\]
\caption{The Quiver $Q_{m,n}$}
\end{figure}
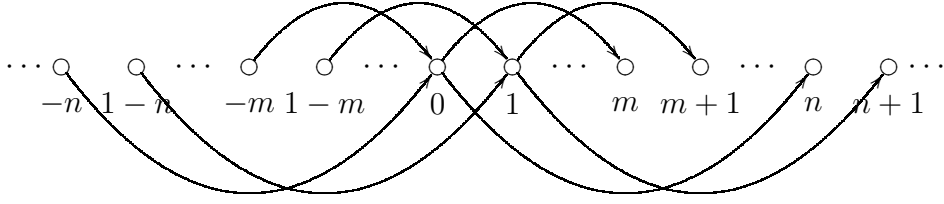
\end{example}
\begin{example}[The quiver $Q_{\infty \times \infty}$]\label{qinf}
We now consider the quiver $Q_{\infty \times \infty}=(\Z \times \Z,A_{\infty \times \infty})$ where the set of arrows is $A_{\infty \times \infty}=\{ \rho^{ij}_d \ |\  d \in \{1,2\}, (i,j) \in \Z \times \Z \}$.  We define the map $\theta : \{1,2\} \rightarrow \{(1,0) , (0,1)\}$ by $\theta(1)=(1,0)$ and $\theta(2)=(0,1)$.  Then $\rho^k_d$ is the arrow whose tail, $t(\rho^{ij}_d)$, is the vertex $(i,j)$ and whose head, $h( \rho^{ij}_d)$, is the vertex $(i,j)+\theta(d)$.
\begin{figure}[h2]
\begin{center}
\begin{tikzpicture}
\draw (0.1,0.1) circle [radius=0.1];
\draw (2.1,0.1) circle [radius=0.1];
\draw (4.1,0.1) circle [radius=0.1];
\draw (0.1,1.1) circle [radius=0.1];
\draw (2.1,1.1) circle [radius=0.1];
\draw (4.1,1.1) circle [radius=0.1];
\draw (0.1,2.1) circle [radius=0.1];
\draw (2.1,2.1) circle [radius=0.1];
\draw (4.1,2.1) circle [radius=0.1];
\draw [thick, ->] (0.2,0.1) -- (2.0,0.1);
\draw [thick, ->] (0.1,0.2) -- (0.1,1.0);
\draw [thick, ->] (2.2,0.1) -- (4.0,0.1);
\draw [thick, ->] (2.1,0.2) -- (2.1,1.0);
\draw [thick, ->] (4.1,0.2) -- (4.1,1.0);
\draw [thick, ->] (0.2,1.1) -- (2.0,1.1);
\draw [thick, ->] (2.2,1.1) -- (4.0,1.1);
\draw [thick, ->] (0.1,1.2) -- (0.1,2.0);
\draw [thick, ->] (2.1,1.2) -- (2.1,2.0);
\draw [thick, ->] (4.1,1.2) -- (4.1,2.0);
\draw [thick, ->] (0.2,2.1) -- (2.0,2.1);
\draw [thick, ->] (2.2,2.1) -- (4.0,2.1);
\draw [thick, ->] (0.1,-0.4) -- (0.1,0);
\draw [thick, ->] (2.1,-0.4) -- (2.1,0);
\draw [thick, ->] (4.1,-0.4) -- (4.1,0);
\draw [thick, ->] (0.1,2.2) -- (0.1,2.6);
\draw [thick, ->] (2.1,2.2) -- (2.1,2.6);
\draw [thick, ->] (4.1,2.2) -- (4.1,2.6);
\draw [thick, ->] (-0.6,0.1) -- (0,0.1);
\draw [thick, ->] (-0.6,1.1) -- (0,1.1);
\draw [thick, ->] (-0.6,2.1) -- (0,2.1);
\draw [thick, ->] (4.2,0.1) -- (4.8,0.1);
\draw [thick, ->] (4.2,1.1) -- (4.8,1.1);
\draw [thick, ->] (4.2,2.1) -- (4.8,2.1);
\node at (0.85,-0.2) {($-1$,$-1$)};
\node at (2.77, -0.2) {(0,$-1$)};
\node at (4.77, -0.2) {(1,$-1$)};
\node at (0.7,0.8) {($-1$,0)};
\node at (2.7, 0.8) {(0, 0)};
\node at (4.7, 0.8) {(1, 0)};
\node at (0.7,1.8) {($-1$,1)};
\node at (2.7, 1.8) {(0, 1)};
\node at (4.7, 1.8) {(1, 1)};
\node at (5.3,2.1) {. . .};
\node at (5.3,1.1) {. . .};
\node at (5.3,0.1) {. . .};
\node at (-1.1,2.1) {. . .};
\node at (-1.1,1.1) {. . .};
\node at (-1.1,0.1) {. . .};
\node at (0.1,-1) {.};
\node at (0.1,-0.6) {.};
\node at (0.1,-0.8) {.};
\node at (2.1,-0.6) {.};
\node at (2.1,-1) {.};
\node at (2.1,-0.8) {.};
\node at (4.1,-0.6) {.};
\node at (4.1,-1) {.};
\node at (4.1,-0.8) {.};
\node at (4.1,2.8) {.};
\node at (4.1,3) {.};
\node at (4.1,3.2) {.};
\node at (2.1,2.8) {.};
\node at (2.1,3) {.};
\node at (2.1,3.2) {.};
\node at (0.1,2.8) {.};
\node at (0.1,3) {.};
\node at (0.1,3.2) {.};
\end{tikzpicture}
\end{center}
\caption{The Quiver $Q_{\infty \times \infty}$}
\end{figure}
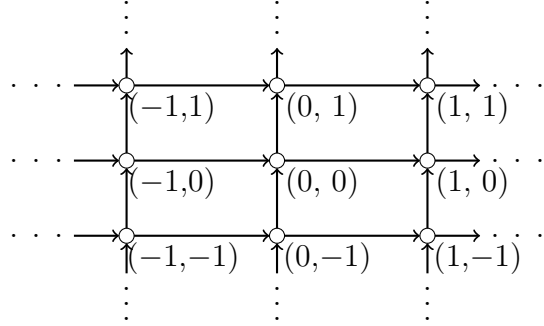
\end{example}

A \index{path}\emph{path} in a quiver $Q$ is a sequence $\tau = \rho_n \rho_{n-1} \cdots \rho_1$ of arrows such that $h(\rho_i)=t(\rho_{i+1})$ for each $1 \leq i \leq n-1$.  We define $t(\tau)=t(\rho_1)$ and $h(\tau)=h(\rho_n)$. The \index{path!algebra}\emph{path algebra} of $Q$ is the $\C$-algebra whose underlying vector space has for basis the set of paths in $Q$ and with multiplication given by:
\begin{equation*}
\tau_2 \cdot \tau_1 =
\begin{cases}
\tau_2 \tau_1, & \text{if } h(\tau_1)=t(\tau_2), \\
0, & \text{otherwise},
\end{cases}
\end{equation*}
where $\tau_2 \tau_1$ denotes the concatenation of the paths $\tau_1$ and $\tau_2$.  We denote the path algebra of $Q$ by $\C Q$.  For any vertex $x\in X$ we let $e_x$ denote the trivial path starting and ending at $x$ and with multiplication given by $e_x\tau=\delta_{h(\tau)x}\tau$ and $\tau e_x=\delta_{t(\tau)x}\tau$ for any path $\tau$.

Let $\ml V (X)$ denote the category of $X$-graded vector spaces over $\C$ with morphisms being linear maps that respect the grading.  We denote by $V(x)$ the $x$-component of an object $V \in \ml V(X)$ for any $x\in X$.  Then a \emph{representation} of a quiver $Q=(X,A)$ is an object $V \in \ml V(X)$ along with a linear map $V(\rho)\colon V(t(\rho))\to V(h(\rho))$ for every $\rho \in A$.  For a path $\tau=\rho_n\cdots \rho_1$, we define $V(\tau)=V(\rho_n) \cdots V(\rho_1)$.  If we let $\rep(Q)$ denote the category of representations of $Q$, then there is an equivalence of categories $\C Q\Md \cong \rep(Q)$.

A \emph{relation} in a quiver $Q$ is an expression of the form $r=\sum_{i=1}^k a_i \tau_i$, where $a_i \in \C$ and $\tau_i$ is a path in $Q$ for all $i=1,...,k$.  We say a representation $V\in \rep(Q)$ \emph{satisfies the relation} $r$ if $\sum_{i=1}^k a_i V(\tau_i)=0.$  If $R$ is a set of relations, we denote by $\rep(Q,R)$ the full subcategory of $\rep(Q)$ consisting of representations that satisfy the relations $R$.  There is an equivalence of categories $\C Q/I_R \Md \cong \rep(Q,R)$, where $I_R$ is the two-sided ideal of $\C Q$ generated by $R$.


\section{Quiver Morphisms}\label{quimorph}
In this section we establish some theory on quiver morphisms.  In particular, we recall the definition of a covering morphism of quivers.  We then discuss several functors, induced by such morphisms, between categories of quiver representations.  These functors have many nice properties that will play important roles in Section \ref{application}.

If $Q=(X,A,t,h)$ and $Q'=(X',A',t,h)$ then a \emph{quiver morphism} $\varphi \in \Hom(Q,Q')$ consists of a pair of maps $\varphi_1\colon X\to X'$ and $\varphi_2\colon A \to A'$ such that $\varphi_1 \circ t=t\circ\varphi_2$ and $\varphi_1 \circ h=h\circ\varphi_2$.

\begin{example}\label{morphf}
Let $Q_{\infty}$ denote the quiver $(\Z,\rho_i,\bar\rho_i)$, where $t(\rho_i)=i=h(\bar \rho_{i+1})$, and $h(\rho_i)=i+1=t(\bar \rho_{i+1})$, where indices are considered mod $s$.  If $Q_{\infty \times \infty}$ denotes the quiver from Example \ref{qinf}, consider the morphism $f \in \Hom(Q_{\infty \times \infty},Q_{\infty})$ given by $f_1(i,j)=i-j$, $f_2(\rho_1^{(i,j)})=\rho_{i-j}$, and $f_2(\rho_2^{(i,j)})=\bar \rho_{i-j}$ for all $(i,j) \in \Z\times \Z$.  It is easy to check that $f$ is indeed a quiver morphism.
\end{example}

\begin{example}\label{morphg}
For all $s \in \N$, define the quiver $\widehat Q_s=(\Z/s\Z, \rho_i,\bar \rho_i)$, where $t(\rho_i)=h(\bar \rho_{i+1})=i$ and $h(\rho_i)=t(\bar \rho_{i+1})=i+1.$  Note that when $s=0$ we retrieve the quiver $Q_{\infty}$ of Example \ref{morphf} above.  Let $m,n \in \Z$ with $\gcd(m,n)=1$.  Then we have $\gcd(m,m+n)=1$ and so for all $k \in \Z$ there is a unique integer $0\leq j_k <m+n$ such that $k \equiv j_k m \mod(m+n)$.  Then we have a morphism $g \in \Hom(Q_{m,n},\widehat Q_{m+n})$, where $Q_{m,n}$ denotes the quiver of Example \ref{qmn}, given by $g_1(k)=j_k$, $g_2(\rho^k_1)=\rho_{j_k}$, and $g_2(\rho^k_2)=\bar\rho_{j_k}$ for all $k \in \Z$.
\begin{figure}[h2]
\begin{center}
\begin{tikzpicture}
\draw (0,0) circle [radius=0.1];
\draw (0.9,-1.4) circle [radius=0.1];
\draw (-0.9,-1.4) circle [radius=0.1];
\draw[thick,->](-0.8,-1.48) to [out=320,in=220] (0.8,-1.48);
\draw[thick,->](0.8,-1.38) to [out=160,in=20] (-0.8,-1.38);
\draw[thick,->](-0.83,-1.29) to [out=20,in=270] (-0.07,-0.11);
\draw[thick,->](0.07,-0.12) to [out=270,in=160] (0.83,-1.29);
\draw[thick,->](-0.11,-0.05) to [out=200,in=90] (-0.89,-1.3);
\draw[thick,->](0.91,-1.28) to [out=90,in=340] (0.11,-0.05);
\node at (-1.2,-1.4) {1};
\node at (1.2,-1.4) {2};
\node at (0,0.4) {3};
\end{tikzpicture}
\end{center}
\caption{The quiver $\widehat Q_3$}
\end{figure}
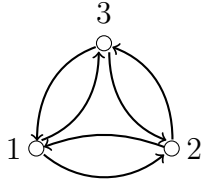
\end{example}

For the most part, we will be interested in quiver morphisms satisfying a certain property, known as \index{covering morphisms}\emph{covering morphisms}, which we introduce now.
\begin{defn}[Covering Morphism]
Let $Q=(X,A)$ and $Q'=(X',A')$ be quivers and let $\varphi \in \Hom(Q,Q')$.  Then $\varphi$ will be said to be a \emph{covering morphism} if for every $x \in X$ and every path $\tau'$ in $Q'$ such that $h(\tau')=\varphi(x)$ there exists a unique path $\tau$ in $Q$ such that $h(\tau)=x$ and $\varphi(\tau)=\tau'$.
\end{defn}
One easily verifies that the morphisms $g$ and $f$ of Examples \ref{morphg} and \ref{morphf} are covering morphisms.

Given a quiver morphism $\varphi \colon Q \to Q'$, there are three important functors one can use to study the relationship between the representations of the two quivers.
\begin{defn}\label{repfunctors}
Let $Q=(X,A)$ and $Q'=(X',A')$ be two quivers and let $\varphi \in \Hom(Q,Q')$.
\begin{enumerate}[(i)]
\item \index{restriction functor}The \emph{restriction functor} of $\varphi$ is the functor $\varphi^*\colon \Rep(Q')\to \Rep(Q)$ defined for all $V\in \Rep(Q')$ as follows:
\begin{enumerate}[(a)]
\item For any $x \in X$, $\varphi^*(V)(x)=V(\varphi(x))$.

\item For any $\rho \in A$, $\varphi^*(V)(\rho)=V(\varphi(\rho))$.
\end{enumerate}
If $V,U \in \Rep(Q)$ and $f\in \Hom_Q(V,U)$, then the morphism $\varphi^*(f)$ is defined by $\varphi^*(f)(x)=f(\varphi(x))$ for every $x \in X$.

\item If $\varphi$ is a covering morphism, then the \index{right!extension functor}\emph{right extension functor} of $\varphi$ is the functor $\varphi_* \colon \Rep(Q)\to \Rep(Q')$ defined for all $V \in \Rep(Q)$ as follows:
\begin{enumerate}[(a)]
\item For any $x' \in X'$, $\varphi_*(V)(x')=\prod_{x\in \varphi^{-1}(x')} V(x)$, where by convention we take the empty product to be the zero vector space.

\item For any $\rho' \in A'$, $\varphi_*(V)(\rho')=\prod_{\rho \in \varphi^{-1}(\rho')}V(\rho)$.
\end{enumerate}
If $V,U \in \Rep(Q')$ and $f \in \Hom_{Q'}(V,U)$, then the morphism $\varphi_*(f)$ is defined by $\varphi_*(f)(x')=\prod_{x\in \varphi^{-1}(x')} f(x)$.

\item If $\varphi$ is a covering morphism, then the \index{left!extension functor}\emph{left extension functor} of $\varphi$ is the functor $\varphi_! \colon \Rep(Q)\to \Rep(Q')$ defined for all $V \in \Rep(Q)$ as follows:
\begin{enumerate}[(a)]
\item For any $x' \in X'$, $\varphi_!(V)(x')=\bigoplus_{x\in \varphi^{-1}(x')} V(x)$, where by convention we take the empty coproduct to be the zero vector space.

\item For any $\rho' \in A'$, $\varphi_!(V)(\rho')=\bigoplus_{\rho \in \varphi^{-1}(\rho')}V(\rho)$.
\end{enumerate}
If $V,U \in \Rep(Q')$ and $f \in \Hom_{Q'}(V,U)$, then the morphism $\varphi_!(f)$ is defined by $\varphi_!(f)(x')=\bigoplus_{x\in\varphi^{-1}(x')} f(x)$.
\end{enumerate}
\end{defn}

We will use the above functors to study the representation theory of the quivers $Q_{m,n}$ and $Q_{\infty \times \infty}$ of Examples \ref{qmn} and \ref{qinf}.  It turns out that the functors in Definition \ref{repfunctors} are closely related.
\begin{prop}[{\cite[Theorem 4.1]{Enochs}}]\label{adjointthm}
Let $Q,Q'$ be two quivers and let $\varphi \in \Hom(Q,Q')$ be a covering morphism.  Then $\varphi^*$ is left adjoint to $\varphi_*$ and right adjoint to $\varphi_!$.
\end{prop}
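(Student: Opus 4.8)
The plan is to prove each of the two adjunctions by writing down an explicit, natural bijection between the corresponding $\Hom$-spaces, rather than by chasing unit–counit triangle identities; the covering hypothesis will enter precisely in matching up the arrow-compatibility conditions on the two sides. It is convenient to first record the local form of the covering condition: applying the definition to paths of length one shows that for each $x \in X$, $\varphi$ restricts to a bijection from the arrows of $Q$ with head $x$ onto the arrows of $Q'$ with head $\varphi(x)$. This head bijection is exactly what renders $\varphi_*V$ a well-defined representation; I will also use the analogous bijection on arrows with a prescribed tail, the dual lifting property available for covering morphisms, and both will enter when matching morphisms below.

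For the adjunction $\varphi^* \dashv \varphi_*$, fix $W \in \Rep(Q')$ and $V \in \Rep(Q)$. At the level of underlying graded vector spaces, a family of maps $\psi(x)\colon W(\varphi(x)) \to V(x)$, one for each $x \in X$, corresponds by the universal property of the product to a family $\eta(x')\colon W(x') \to \prod_{x\in\varphi^{-1}(x')} V(x)$ via $\eta(x') = (\psi(x))_{x \in \varphi^{-1}(x')}$ (using $W(\varphi(x))=W(x')$). This assignment is a bijection; the real content is that $\psi$ respects the arrow maps exactly when $\eta$ does. To see this, fix an arrow $\rho'$ of $Q'$ and, for each $y\in\varphi^{-1}(h(\rho'))$, let $\rho_y$ be the unique arrow of $Q$ over $\rho'$ with head $y$. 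Unwinding $(\varphi_*V)(\rho')$ and projecting the compatibility square for $\eta$ onto its $y$-component yields exactly
\[
V(\rho_y)\circ\psi(t(\rho_y)) = \psi(y)\circ W(\varphi(\rho_y)),
\]
which is the compatibility square for $\psi$ at the arrow $\rho_y$. Since every arrow of $Q$ arises as $\rho_y$ for a unique pair $(\rho',y)$, the family $\eta$ is a morphism in $\Rep(Q')$ if and only if $\psi$ is a morphism in $\Rep(Q)$. Naturality in both variables is then a routine check, giving $\Hom_Q(\varphi^*W,V)\cong\Hom_{Q'}(W,\varphi_*V)$.

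The adjunction $\varphi_! \dashv \varphi^*$ is established by the dual argument. Now a family $\theta(x)\colon V(x)\to W(\varphi(x))$ corresponds, by the universal property of the coproduct, to a family $\zeta(x')\colon \bigoplus_{x\in\varphi^{-1}(x')}V(x)\to W(x')$ with $\zeta(x')=\sum_x \theta(x)$, and I would check as above that $\theta$ respects arrows precisely when $\zeta$ does. Here the computation is organized using the tail version of the covering bijection: restricting the compatibility square for $\zeta$ to each summand $V(x)$ and using that there is a unique arrow over a given $\rho'$ with tail $x$ reduces the condition to the compatibility square for $\theta$. Together with naturality this gives $\Hom_{Q'}(\varphi_!V,W)\cong\Hom_Q(V,\varphi^*W)$.

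I expect the main obstacle to be the arrow-compatibility step rather than the vector-space bijection, which is purely formal. This is the only place the covering hypothesis is used, and it is genuinely needed: without unique lifting, the fibrewise (co)products would not interact correctly with the structure maps and the $\Hom$-bijections would fail to respect the quiver structure. A cleaner but more abstract alternative would be to regard a representation as a functor on the free $\C$-linear category generated by $Q$ and to identify $\varphi_!$ and $\varphi_*$ as left and right Kan extensions along $\varphi$; the content of the proposition then becomes the statement that the covering condition collapses the relevant (co)limits over comma categories to the single-fibre (co)products appearing in Definition \ref{repfunctors}.
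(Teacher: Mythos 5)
Your proof of the first adjunction $\varphi^*\dashv\varphi_*$ is correct and complete, and it uses exactly what the paper's definition of covering morphism provides: unique lifting of paths with prescribed \emph{head}, which for length-one paths gives your head bijection on arrows. The paper does not write this argument out (it defers to \cite[Theorem 4.1]{Enochs} and then asserts the other adjunction is dual), so your first two paragraphs are a legitimate fleshing-out of the cited proof rather than a departure from it.

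The second half has a genuine gap: you invoke ``the analogous bijection on arrows with a prescribed tail, the dual lifting property available for covering morphisms,'' but under the definition of covering morphism used in this paper that property is not available, and it does not follow from the head-lifting condition. Concretely, let $Q$ have vertices $a,b_1,b_2$ and arrows $a\to b_1$, $a\to b_2$, let $Q'$ have vertices $a',b'$ and the single arrow $\rho'\colon a'\to b'$, and let $\varphi$ collapse $b_1,b_2$ to $b'$ and both arrows to $\rho'$. Every path of $Q'$ ending at $\varphi(x)$ lifts uniquely to a path ending at $x$, so $\varphi$ is a covering morphism in the paper's sense, yet $\rho'$ has two lifts with tail $a$. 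This is not a repairable omission in your argument, because the conclusion itself fails for this $\varphi$: taking $V(a)=V(b_1)=V(b_2)=\C$ with identity arrow maps and $W(a')=W(b')=\C$ with identity arrow map, one computes $\Hom_{Q'}(\varphi_!(V),W)\cong\C^2$ while $\Hom_{Q}(V,\varphi^*(W))\cong\C$, so $\varphi_!$ is not left adjoint to $\varphi^*$ here (indeed the defining formula for $\varphi_!(V)(\rho')$ only typechecks when distinct lifts of $\rho'$ have distinct tails). So your second adjunction needs tail lifting as an additional hypothesis, not as a consequence of the definition. In fairness, the paper's own proof has the same defect --- the claim that the second adjunction is ``completely dual'' silently requires the dual lifting hypothesis --- and the covering morphisms to which the paper actually applies the proposition (the morphisms $f$ and $g$ of Examples \ref{morphf} and \ref{morphg}) do satisfy tail lifting, so the downstream results are unaffected. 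But as a proof of the proposition as stated, the appeal to a tail bijection is an unjustified step, and it is exactly where the argument breaks.
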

\begin{proof}
In \cite[Theorem 4.1]{Enochs} it is shown explicitly that $\varphi^*$ is left adjoint to $\varphi_*$.  The proof that $\varphi^*$ is right adjoint to $\varphi_!$ is completely dual to the argument presented there.
\end{proof}

\begin{cor}\label{adjointcor}
The functors $\varphi^*$, $\varphi_*$, and $\varphi_!$ are additive.  Moreover, $\varphi^*$  is exact, $\varphi_*$ is left exact, and $\varphi_!$ is right exact.
\end{cor}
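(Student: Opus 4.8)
The plan is to derive the entire corollary formally from Proposition \ref{adjointthm} together with standard facts about adjoint functors between abelian categories. First I would record that for any quiver $Q$ the category $\Rep(Q)$ is abelian, being equivalent to $\C Q\Md$; kernels, cokernels, and finite biproducts all exist and are computed vertexwise (and arrowwise). Thus both $\Rep(Q)$ and $\Rep(Q')$ are abelian, and the usual machinery of adjunctions is available.

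For additivity I would invoke the principle that a functor between additive categories admitting an adjoint on \emph{either} side preserves finite biproducts, and is therefore additive (recall that for functors between additive categories, additivity is equivalent to preservation of finite products, equivalently of finite coproducts). By Proposition \ref{adjointthm}, $\varphi^*$ has adjoints on both sides, $\varphi_*$ has the left adjoint $\varphi^*$, and $\varphi_!$ has the right adjoint $\varphi^*$; hence each of the three functors has an adjoint on at least one side and is therefore additive.

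For the exactness claims I would use that a left adjoint preserves all colimits, in particular cokernels, and so is right exact, while a right adjoint preserves all limits, in particular kernels, and so is left exact. Reading off Proposition \ref{adjointthm}: $\varphi_!$ is a left adjoint, hence right exact; $\varphi_*$ is a right adjoint, hence left exact; and $\varphi^*$ is simultaneously a left adjoint (to $\varphi_*$) and a right adjoint (to $\varphi_!$), so it is both left and right exact, i.e.\ exact.

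I do not expect a genuine obstacle here, since the statement is a formal consequence of the adjunctions already in hand; the only point requiring care is the bookkeeping of matching each functor to the side(s) on which it is an adjoint, and in particular noticing that the full exactness of $\varphi^*$ hinges on its being an adjoint on \emph{both} sides. As a sanity check, one could instead argue directly from the vertexwise definitions --- $\varphi^*$ by restriction, $\varphi_*$ by product, and $\varphi_!$ by coproduct --- since exactness in $\Rep(Q)$ is tested vertexwise and finite (co)products are exact; but routing everything through Proposition \ref{adjointthm} is cleaner and avoids repeating computations.
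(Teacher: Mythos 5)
Your argument is correct and is essentially the paper's own proof: both derive additivity from the fact that functors admitting an adjoint between additive categories are additive, and derive the exactness statements from Proposition \ref{adjointthm} via the standard facts that left adjoints are right exact and right adjoints are left exact, with $\varphi^*$ exact because it is an adjoint on both sides. Your extra bookkeeping and the optional vertexwise sanity check are fine but add nothing beyond the paper's two-sentence proof.
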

\begin{proof}
  The first statement follows from Theorem \ref{adjointthm} and the that fact that adjoint functors between additive categories are necessarily additive.  The second statement follows from Theorem \ref{adjointthm} and the fact that a left (resp.\ right) adjoint functor is always right (resp.\ left) exact.
\end{proof}

We will now focus on other properties of these functors.

\begin{lem}
Let $Q, Q'$ be quivers and let $\varphi \in \Hom(Q,Q')$ be a covering morphism.  Then both $\varphi_*$ and $\varphi_!$ are faithful.
\end{lem}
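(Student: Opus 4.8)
The plan is to prove faithfulness directly from the pointwise definitions of the two functors given in Definition \ref{repfunctors}. Since both $\varphi_*$ and $\varphi_!$ are additive by Corollary \ref{adjointcor}, it suffices to show that each carries a nonzero morphism to a nonzero morphism; equivalently, that $\varphi_*(f)=0$ (respectively $\varphi_!(f)=0$) forces $f=0$. The covering hypothesis is used only to guarantee that these functors are defined in the first place.

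First I would fix $V,U\in\rep(Q)$ and a morphism $f\in\Hom_Q(V,U)$, recalling that such an $f$ is precisely the data of linear maps $f(x)\colon V(x)\to U(x)$ for each $x\in X$ (the arrow-compatibility conditions play no role in the present argument). Assuming $\varphi_*(f)=0$, the definition gives, for every $x'\in X'$, that $\varphi_*(f)(x')=\prod_{x\in\varphi^{-1}(x')}f(x)=0$ as a map $\prod_x V(x)\to\prod_x U(x)$.

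The key observation is elementary: the product map $\prod_{x}f(x)$ sends $(v_x)_x\mapsto(f(x)v_x)_x$, and this vanishes if and only if each component $f(x)$ vanishes. Hence $f(x)=0$ for every $x\in\varphi^{-1}(x')$. As $x'$ ranges over $X'$, the fibers $\varphi^{-1}(x')$ partition $X$, since every vertex of $X$ lies over its image $\varphi_1(x)$; I would therefore conclude $f(x)=0$ for all $x\in X$, that is $f=0$. This establishes that $\varphi_*$ is faithful.

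The argument for $\varphi_!$ is identical, replacing the product $\prod$ by the coproduct $\bigoplus$ throughout: a direct sum of linear maps $\bigoplus_x f(x)$ is zero exactly when each summand $f(x)$ is zero, and the same partition of $X$ into fibers then yields $f=0$. There is no genuine obstacle here; the only point requiring care is the routine fact that a product (respectively coproduct) of linear maps vanishes precisely when all of its components do, together with the observation that the fibers of $\varphi_1$ cover $X$.
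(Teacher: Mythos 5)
Your proof is correct and follows essentially the same route as the paper's: reduce faithfulness to showing the zero morphism has trivial preimage (using additivity/group-homomorphism structure of the map on Hom-spaces), then observe that a product (resp.\ coproduct) of linear maps vanishes only if every component vanishes, and that the fibers $\varphi^{-1}(x')$ exhaust $X$. Your explicit remark that the fibers of $\varphi_1$ cover $X$ is a point the paper leaves implicit, but the argument is the same.
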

\begin{proof}
Let $V,U \in \Rep(Q)$ and consider the map
\begin{equation*}
(\varphi_!)_{VU}\colon \Hom_{\Rep(Q)}(V,U)\to \Hom_{\Rep(Q')}(\varphi_!(V),\varphi_!(U)).
\end{equation*}
We wish to show that this map is injective.  Since it is a group homomorphism, it is enough to consider the preimage of the morphism $0\colon \varphi_!(V)\to \varphi_!(U)$.  This morphism is defined by the linear maps $0(x')\colon \varphi_!(V)(x')\to \varphi_!(U)(x')$ for every $x' \in X'$.  So if $\varphi_!(\varsigma)=0$, then $\coprod_{x\in\varphi^{-1}(x')}\varsigma(x)=0$ for every $x'$, and it follows that $\varsigma(x)=0$ for every $x \in X$.  Hence $\sigma=0$.  To show that $\varphi_*$ is faithful is similar.
\end{proof}

\begin{lem}
Let $Q,Q'$ be quivers and let $\varphi \in \Hom(Q,Q')$ be a covering morphism.  Then both $\varphi_*$ and $\varphi_!$ are exact.
\end{lem}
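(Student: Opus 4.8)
My plan is to reduce exactness to a vertexwise statement and then invoke the elementary behavior of products and coproducts of vector spaces. Recall from Section~\ref{quiv} that $\Rep(Q') \cong \C Q'\Md$, and that in a module category kernels, images and cokernels are computed on underlying objects; concretely, a sequence in $\Rep(Q')$ is exact if and only if, for every vertex $x' \in X'$, the induced sequence of vector spaces at $x'$ is exact. The same criterion holds in $\Rep(Q)$. Hence a short exact sequence $0 \to V \to U \to W \to 0$ in $\Rep(Q)$ is exactly a collection of short exact sequences $0 \to V(x) \to U(x) \to W(x) \to 0$ of vector spaces, one for each $x \in X$.

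Since Corollary~\ref{adjointcor} already provides that $\varphi_*$ is left exact and $\varphi_!$ is right exact, I only need to establish the two complementary halves: that $\varphi_*$ carries epimorphisms to epimorphisms and that $\varphi_!$ carries monomorphisms to monomorphisms. Under the vertexwise criterion, an epimorphism $g$ in $\Rep(Q)$ is a morphism with $g(x)$ surjective for every $x \in X$, and a monomorphism is one with $g(x)$ injective for every $x$. By Definition~\ref{repfunctors}, the value of $\varphi_*(g)$ at a vertex $x' \in X'$ is the product $\prod_{x \in \varphi^{-1}(x')} g(x)$, and the value of $\varphi_!(g)$ at $x'$ is the direct sum $\bigoplus_{x \in \varphi^{-1}(x')} g(x)$.

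It then remains only to observe two standard facts about vector spaces: a product of surjective linear maps is surjective (lift each component separately), so $\varphi_*(g)(x')$ is surjective for every $x'$ whenever $g$ is an epimorphism; and a direct sum of injective linear maps is injective, since its kernel is the direct sum of the kernels, so $\varphi_!(g)(x')$ is injective for every $x'$ whenever $g$ is a monomorphism. Together with the vertexwise criterion this shows $\varphi_*$ preserves epimorphisms and $\varphi_!$ preserves monomorphisms; combined with Corollary~\ref{adjointcor}, both functors are exact.

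I do not anticipate a genuine obstacle: the argument is formal once exactness is tested vertexwise. The only point that warrants a second glance is the possibly infinite product defining $\varphi_*$, since right-exactness of an infinite direct product can fail in more general abelian categories; here it causes no trouble because we work with vector spaces over $\C$ (equivalently, every short exact sequence of vector spaces splits), so both infinite products and infinite direct sums of exact sequences remain exact. Alternatively, one may skip the appeal to Corollary~\ref{adjointcor} and apply these two observations directly to a short exact sequence to obtain full exactness in a single step.
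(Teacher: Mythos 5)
Your proof is correct and takes essentially the same route as the paper's: both use Corollary \ref{adjointcor} to obtain the halves of exactness coming from the adjunctions, and then settle the complementary halves by a vertexwise computation that ultimately rests on the exactness of (possibly infinite) products and direct sums of vector spaces. The only difference is presentational: the paper verifies that $\varphi_*$ preserves cokernels and $\varphi_!$ preserves kernels by computing these objects explicitly, whereas you check the slightly lighter (and, given left/right exactness, equivalent) conditions that $\varphi_*$ preserves epimorphisms and $\varphi_!$ preserves monomorphisms.
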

\begin{proof}
By Corollary \ref{adjointcor}, we need only show that $\varphi_*$ is right exact and $\varphi_!$ is left exact.  Since the categories $\Rep(Q)$ and $\Rep(Q')$ are abelian, it is enough to show that $\varphi_*$ preserves cokernels and $\varphi_!$ preserves kernels.

Let $V,U \in \Rep(Q)$ and $\varsigma \in \Hom_{\Rep(Q)}(V,U)$.  The vector spaces of the representation $\varphi_*(\coker \sigma)$ are defined by $\varphi_*(\coker \varsigma)(x')=\prod_{x\in\varphi^{-1}(x')}U(x)/\im \varsigma(x)$.  However, the map $\varphi_*(\varsigma)(x')\colon \varphi_*(V)(x')\to \varphi_*(U)(x')$ is given by $\prod_{x\in\varphi^{-1}(x')}\varsigma(x)$.  Hence
\begin{align*}
\coker(\varphi_*(\varsigma))(x')&=\varphi_*(U)(x')/\im \varphi_*(\varsigma)(x')\\
&=\prod_{x\in\varphi^{-1}(x')} U(x)/\prod_{x\in\varphi^{-1}(x')}\im \varsigma(x) \\
&\cong \prod_{x\in\varphi^{-1}(x')} U(x)/\im \varsigma(x) \\
&=\varphi_*(\coker \varsigma)(x').
\end{align*}
It is now clear that for any arrow $\rho' \in A'$ the maps $\varphi_*(\coker \varsigma)(\rho')$ and $\coker(\varphi_*(\varsigma))(\rho')$ will be equal, and hence $\varphi_*$ preserves cokernels.

The representation $\varphi_!(\ker \varsigma) \in \Rep(Q)$ has vector spaces given by
\begin{equation*}
\varphi_!(\ker \varsigma)(x)=\bigoplus_{x\in\varphi^{-1}(x')} (\ker \varsigma)(x)=\bigoplus_{x\in\varphi^{-1}(x')} \ker(\varsigma(x))
\end{equation*}
for all $x \in X$.  On the other hand, the morphism $\varphi_!(\varsigma) \in \Hom_{\Rep(Q)}(\varphi_!(V),\varphi_!(U))$ is defined by $\varphi_!(\varsigma)(x')=\bigoplus_{x\in\varphi^{-1}(x')}\varsigma(x)$ and hence
\begin{equation*}
\ker(\varphi_!(\varsigma))(x')=\ker \left( \bigoplus_{x\in\varphi^{-1}(x')}\varsigma(x)\right)=\bigoplus_{x\in\varphi^{-1}(x')}\ker(\varsigma(x)).
\end{equation*}
Thus $\varphi_!(\ker \varsigma)(x')=\ker(\varphi_!(\varsigma))(x')$ for all $x'\in X'$.  Again, it is easy to see that for any $\rho' \in A'$ the maps $\varphi_!(\ker \varsigma)(\rho')$ and $\ker(\varphi_!(\varsigma))(\rho')$ will also coincide, and so $\varphi_!$ preserves kernels.
\end{proof}

Any quiver morphism $\varphi\colon Q \to Q'$ induces an algebra homomorphism $\varphi\colon \C Q \to \C Q'$, and so for any set of relations $R'$ in $Q'$ we can consider the preimage
\begin{equation*}
\varphi^{-1}(R')=\left \{\sum_{j=1}^k a_j \tau_j\ |\ \sum_{j=1}^k a_j \varphi(\tau_j) \in R'\right \}.
  \end{equation*}

  We will now show that if $\varphi$ is a covering morphism and $R'$ is a set of relations in $Q'$ then the functors $\varphi^*$,$\varphi_*$, and $\varphi_!$ restrict naturally to the subcategories of representations satisfying the relations $R'$ and $\varphi^{-1}(R')$.  First we recall that if $\varphi\colon Q\to Q'$ is a covering morphism and $\tau'$ is a path in $Q'$, then for any $y \in X$ with $\varphi(y)=h(\tau')$ there is a unique path $\tau$ in $Q$ ending at $y$ with $\varphi(\tau)=\tau'$.  We will denote this unique path $\tau$ by $\varphi^{-1}(\tau')_y$.
\begin{lem}\label{pathlemma}
Let $\varphi\colon Q \to Q'$ be a covering morphism.  Then for any path $\tau'$ in $Q'$ and any representation $V \in \Rep(Q)$ we have
\begin{align}
\varphi_!(V)(\tau')&=\bigoplus_{y: h(\tau')=\varphi(y)}V(\varphi^{-1}(\tau')_y), \text{ and }\label{pathlem1}\\
 \varphi_*(V)(\tau')&=\prod_{y: h(\tau')=\varphi(y)}V(\varphi^{-1}(\tau')_y). \label{pathlem2}
\end{align}
\end{lem}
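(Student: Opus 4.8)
The plan is to prove both identities by induction on the length of the path $\tau'$, treating $\varphi_!$ in detail since the argument for $\varphi_*$ is identical with $\prod$ in place of $\bigoplus$ (the only difference being that the index sets may be infinite, which causes no trouble because, as we will see, each component of the output map depends on exactly one component of the input). First I would record the precise form of the covering property that does the work: for every arrow $\rho'$ in $Q'$ and every vertex $z \in X$ with $\varphi(z) = h(\rho')$, there is a unique arrow $\rho \in \varphi^{-1}(\rho')$ with $h(\rho) = z$. This is just the covering condition applied to length-one paths, and it lets me reindex $\varphi^{-1}(\rho')$ by the heads of its elements. This gives the base case: by Definition \ref{repfunctors}(iii) we have $\varphi_!(V)(\rho') = \bigoplus_{\rho \in \varphi^{-1}(\rho')} V(\rho)$, and reindexing via $y = h(\rho)$ rewrites this as $\bigoplus_{y:\, \varphi(y) = h(\rho')} V(\varphi^{-1}(\rho')_y)$, which is exactly \eqref{pathlem1} for a single arrow.

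For the inductive step I would write $\tau' = \rho'_k \sigma'$ with $\sigma' = \rho'_{k-1}\cdots\rho'_1$ a shorter path, so that $V(\tau') = V(\rho'_k)V(\sigma')$ and hence $\varphi_!(V)(\tau') = \varphi_!(V)(\rho'_k) \circ \varphi_!(V)(\sigma')$. By the inductive hypothesis $\varphi_!(V)(\sigma') = \bigoplus_{w:\, \varphi(w)=h(\sigma')} V(\varphi^{-1}(\sigma')_w)$, and by the base case $\varphi_!(V)(\rho'_k) = \bigoplus_{y:\, \varphi(y)=h(\rho'_k)} V(\varphi^{-1}(\rho'_k)_y)$. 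The key observation is that these two block maps compose cleanly: for each $y$ with $\varphi(y) = h(\tau')$, let $\rho_y := \varphi^{-1}(\rho'_k)_y$ be the unique incoming lift of $\rho'_k$ ending at $y$ and put $w_y := t(\rho_y)$, so that $\varphi(w_y) = t(\rho'_k) = h(\sigma')$. Then the $y$-component of the composite is $V(\rho_y) \circ V(\varphi^{-1}(\sigma')_{w_y}) = V\bigl(\rho_y \cdot \varphi^{-1}(\sigma')_{w_y}\bigr)$.

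The crux of the argument, and the step I expect to be the main obstacle, is identifying this concatenation with the correct lift of $\tau'$. The path $\rho_y \cdot \varphi^{-1}(\sigma')_{w_y}$ ends at $y$ and satisfies $\varphi\bigl(\rho_y \cdot \varphi^{-1}(\sigma')_{w_y}\bigr) = \rho'_k \sigma' = \tau'$; since $\varphi$ is a covering morphism, the lift of $\tau'$ ending at $y$ is unique, so this concatenation must equal $\varphi^{-1}(\tau')_y$. Therefore the $y$-component of $\varphi_!(V)(\tau')$ is $V(\varphi^{-1}(\tau')_y)$, which is precisely \eqref{pathlem1}.

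I would close by remarking that the block bookkeeping underlying the composition requires some care: one must check that each output summand $V(y)$ receives a contribution from exactly one input summand, namely $V(t(\varphi^{-1}(\tau')_y))$, and this is exactly what the uniqueness of incoming lifts guarantees at every stage. Because each output component thus depends on a single input component, the identical argument proves \eqref{pathlem2} for $\varphi_*$ verbatim, with no convergence issues arising from the possibly infinite products.
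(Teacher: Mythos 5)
Your proof is correct, and it follows the same overall strategy as the paper's: induction on the length of $\tau'$, with the uniqueness clause of the covering property used to identify a concatenation of lifts with the lift $\varphi^{-1}(\tau')_y$. The one genuine difference is the direction in which you split the path. The paper writes $\tau'=\alpha'\rho'$ with $\rho'$ the \emph{first} arrow and applies the inductive hypothesis to the terminal segment $\alpha'$, whereas you write $\tau'=\rho'_k\sigma'$ with $\rho'_k$ the \emph{last} arrow and induct on the initial segment $\sigma'$. Because lifts in a covering are anchored at their heads, your choice is slightly better adapted to the bookkeeping: every summand $V(y)$ of the target traces back through the unique incoming lift $\rho_y$ and then through the unique lift of $\sigma'$ ending at $t(\rho_y)$, so each output component depends on exactly one input component and the composite is immediately the asserted block map. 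In the paper's direction an extra remark is required: a lift $\varphi^{-1}(\rho')_x$ of the first arrow may fail to extend to a lift of all of $\tau'$ (its head $x$ need not be the tail of any lift of $\alpha'$), and the paper must explicitly observe that such components are mapped to $0$ and can be discarded. Your closing remark about $\varphi_*$ --- that each output component factors through a single factor of the domain, so the argument transfers verbatim to possibly infinite products --- is precisely the justification the paper leaves implicit in its statement that the proof of \eqref{pathlem2} is similar.
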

\begin{proof}
We begin by noting that since $\varphi$ is a covering morphism, for any path $\tau'$ in $Q'$ there is a bijection between paths $\tau$ in $Q$ with $\varphi(\tau)=\tau'$ and vertices $x \in X$ with $\varphi(x)=h(\tau')$ given by $x \leftrightarrow \varphi^{-1}(\tau')_x$.  Let $\tau'$ be a path in $Q'$.  We will proceed by induction on the length of $\tau'$, which we denote by $n$.  If $n=0$, then the path $\tau'$ is a trivial path at some vertex $y'$ in $Q'$.  Then for any vertex $y \in\varphi^{-1}(y')$, $\varphi^{-1}(\tau')_y$ is the trivial path at $y$.  Hence we have
\begin{equation*}
\varphi_!(V)(\tau')=\id_{\varphi_!(V)(y')}=\bigoplus_{y\in \varphi^{-1}(y')} \id_{V(y)} = \bigoplus_{y:h(\rho')=\varphi(y)}V(\varphi^{-1}(\rho')_y).
\end{equation*}
 Now let $n\geq1$ and write $\tau'=\alpha'\rho'$, where $\alpha'$ is a path in $Q'$ of length $n-1$ and $\rho'\in A'$.  Then if we assume the result is true for the path $\alpha'$, we have
\begin{equation}\label{pathdecomp}
\varphi_!(V)(\alpha')\varphi_!(V)(\rho')=\bigoplus_{y:h(\alpha')=\varphi(y)}V(\varphi^{-1}(\alpha')_y) \bigoplus_{x:h(\rho')=\varphi(x)} V(\varphi^{-1}(\rho')_x).
\end{equation}
Since $\varphi$ is a covering morphism, $|\{\alpha\ |\ \varphi(\alpha)=\alpha'\}|=|\{\tau\ |\ \varphi(\tau)=\tau'\}|$ as both sets are in bijection with the set of vertices in $X$ that get mapped to $h(\alpha')=h(\tau')$.  Moreoever, for any $y \in X$ with $\varphi(y)=h(\tau')$, if we let $x=t(\varphi^{-1}(\alpha')_y)$ then we have $\varphi^{-1}(\tau')_y=\varphi^{-1}(\alpha')_y \varphi^{-1}(\rho')_x$.  It follows that each $\varphi^{-1}(\tau')_y$ shows up exactly once in the decomposition \eqref{pathdecomp} above.  On the other hand, if $x \in X$ is a vertex such that the arrow $\varphi^{-1}(\rho')_x$ cannot be extended to a path which maps to $\tau'$ under $\varphi$, then in the decomposition \eqref{pathdecomp} $V(x)$ is mapped to 0.  We can therefore ignore such components, and we arrive at \eqref{pathlem1}.  To prove that \eqref{pathlem2} holds is similar.
\end{proof}

\begin{prop}\label{respectrelations}
Let $Q,Q'$ be quivers, $R'$ a set of relations in $Q'$, and $\varphi\colon Q \to Q'$ a covering morphism.
\begin{enumerate}[(i)]
\item If $V \in \Rep(Q',R')$, then $\varphi^*(V) \in \Rep(Q,\varphi^{-1}(R'))$.

\item If $V \in \Rep(Q,\varphi^{-1}(R'))$ then $\varphi_!(V),\varphi_*(V)\in \Rep(Q',R')$.
\end{enumerate}
\end{prop}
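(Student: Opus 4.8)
The plan is to prove the two parts separately, in both cases exploiting the explicit description of how the functors act on paths. The one computation I would record first is that for every path $\tau=\rho_n\cdots\rho_1$ in $Q$ one has $\varphi^*(V)(\tau)=V(\varphi(\tau))$; this is immediate from part (b) of the definition of $\varphi^*$ in Definition \ref{repfunctors} together with the fact that $\varphi$, being a quiver morphism, satisfies $\varphi(\tau)=\varphi(\rho_n)\cdots\varphi(\rho_1)$, so that $V(\varphi(\tau))=V(\varphi(\rho_n))\cdots V(\varphi(\rho_1))=\varphi^*(V)(\rho_n)\cdots\varphi^*(V)(\rho_1)$.

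For part (i) I would argue directly. Let $r=\sum_{j=1}^k a_j\tau_j$ be an element of $\varphi^{-1}(R')$; by the definition of $\varphi^{-1}(R')$ this means $\sum_j a_j\varphi(\tau_j)\in R'$. Since $V\in\Rep(Q',R')$ satisfies every relation in $R'$, we get $\sum_j a_j V(\varphi(\tau_j))=0$. Rewriting each term via $V(\varphi(\tau_j))=\varphi^*(V)(\tau_j)$ yields $\sum_j a_j\varphi^*(V)(\tau_j)=0$, i.e.\ $\varphi^*(V)$ satisfies $r$. As $r\in\varphi^{-1}(R')$ was arbitrary, $\varphi^*(V)\in\Rep(Q,\varphi^{-1}(R'))$.

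For part (ii) the essential tool is Lemma \ref{pathlemma}. As is standard, I take the paths appearing in a single relation to share a common head, so write $r'=\sum_{j=1}^k a_j\tau'_j\in R'$ with $h(\tau'_j)=v'$ for all $j$. Then $\sum_j a_j\varphi_!(V)(\tau'_j)$ is a map into $\varphi_!(V)(v')=\bigoplus_{z\in\varphi^{-1}(v')}V(z)$, and by \eqref{pathlem1} we have $\varphi_!(V)(\tau'_j)=\bigoplus_{z\in\varphi^{-1}(v')}V(\varphi^{-1}(\tau'_j)_z)$. Hence the component of $\sum_j a_j\varphi_!(V)(\tau'_j)$ landing in the summand $V(z)$ equals $\sum_j a_j V(\varphi^{-1}(\tau'_j)_z)=V(r_z)$, where I set $r_z:=\sum_j a_j\varphi^{-1}(\tau'_j)_z$. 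The key observation is that $\varphi(r_z)=\sum_j a_j\tau'_j=r'\in R'$, so $r_z\in\varphi^{-1}(R')$; since $V\in\Rep(Q,\varphi^{-1}(R'))$, this forces $V(r_z)=0$. As this holds for every $z\in\varphi^{-1}(v')$, the whole map $\sum_j a_j\varphi_!(V)(\tau'_j)$ vanishes, so $\varphi_!(V)$ satisfies $r'$. The argument for $\varphi_*$ is identical, replacing the direct sum by a product and invoking \eqref{pathlem2} in place of \eqref{pathlem1}.

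I expect the only delicate point to be the bookkeeping in part (ii): correctly matching, via Lemma \ref{pathlemma}, the $V(z)$-component of the operator $\sum_j a_j\varphi_!(V)(\tau'_j)$ with $V(r_z)$, and recognizing that $r_z$ is a lift of the \emph{entire} relation $r'$ rather than of a proper sub-sum. This is precisely where the common-head convention enters: it guarantees that for each $z\in\varphi^{-1}(v')$ every path $\tau'_j$ does contribute a (unique) lift ending at $z$, so that $\varphi(r_z)=r'\in R'$. (Note that the tails of the various $\varphi^{-1}(\tau'_j)_z$ need not coincide, but this is irrelevant: all that is used is the defining property of $\varphi^{-1}(R')$.) Everything else is a routine consequence of the definitions of the functors.
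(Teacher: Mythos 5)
Your proof is correct and follows essentially the same route as the paper's: part (i) rests on the identity $\varphi^*(V)(\tau)=V(\varphi(\tau))$, and part (ii) uses Lemma \ref{pathlemma} to decompose $\sum_j a_j\varphi_!(V)(\tau'_j)$ into components indexed by the fibre over the common head, each component being $V$ applied to a lift $r_z=\sum_j a_j\varphi^{-1}(\tau'_j)_z$ lying in $\varphi^{-1}(R')$. If anything, your part (i) is slightly tighter than the paper's: the paper only checks the elements of $\varphi^{-1}(R')$ that arise as common-head lifts $\sum_j a_j\varphi^{-1}(\tau'_j)_y$ of relations in $R'$, whereas you verify an arbitrary element of $\varphi^{-1}(R')$, which is exactly what the definition of $\Rep(Q,\varphi^{-1}(R'))$ demands.
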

\begin{proof}
\begin{asparaenum}[(i)]
\item Let $\sum_{j=1}^k a_j\tau'_j \in R'$.  Then since $V \in \Rep(Q',R')$, we have $\sum_{j=1}^k a_j V(\tau'_j)=0$.  Hence for any $y \in X$ with $\varphi(y)=h(\tau'_j)$ we have
\begin{equation*}
\sum_{j=1}^k a_j \varphi^*(V)(\varphi^{-1}(\tau'_j)_y)=\sum_{j=1}^k a_j V(\tau'_j)=0.
\end{equation*}
It follows that $\varphi^*(V)\in \Rep(Q,\varphi^{-1}(R'))$.

\item Once again, suppose $\sum_{j=1}^k a_j\tau'_j \in R'$.  Let $V \in \Rep(Q,\varphi^{-1}(R'))$ and let $H=\{h(\tau_j')\ |\ j=1,\dots,k\}$.  Then using Lemma~\ref{pathlemma} we get:
\begin{equation*}
\sum_{j=1}^k a_j \varphi_!(V)(\tau'_j)=\sum_{j=1}^k a_j \bigoplus_{y\in\varphi^{-1}(\tau'_j)} V(\varphi^{-1}(\tau'_j)_y)=\bigoplus_{y\in\varphi^{-1}(H)} \sum_{j=1}^k a_j V(\varphi^{-1}(\tau'_j)_y)=0.
\end{equation*}
It follows that $\varphi_!(V)\in \Rep(Q',R')$.  To show that $\varphi_*(V) \in \Rep(Q',R')$ is similar.
\end{asparaenum}
\end{proof}

We will denote by $\varphi^*_{R'}$ the restriction of the functor $\varphi^*$ to the subcategory $\Rep(Q',R')$.  Similarly, we denote by $\varphi_!^{R'}$ and $\varphi_*^{R'}$ the restrictions of the functors $\varphi_!$ and $\varphi_*$ to the subcategory $\Rep(Q,\varphi^{-1}(R'))$.  For any set of relations $R'$ in $Q'$ the functors $\varphi^*_{R'}$, $\varphi_!^{R'}$, and $\varphi^{R'}_*$ are all additive, exact, and both $\varphi_!^{R'}$ and $\varphi_*^{R'}$ are faithful.  Moreover, Theorem \ref{adjointthm} implies that $\varphi^*_{R'}$ is left adjoint to $\varphi_*^{R'}$ and right adjoint to $\varphi_!^{R'}$.  \label{repfunctors2}


\section{Representations of $L_{\mu}$}\label{application}

In this section we will obtain some results concerning the representation theory of the Lie algebras $L_\mu$.  First we will establish relationships between the modified enveloping algebras introduced in Section \ref{liealg} and the path algebras of the quivers $Q_{m,n}$ and $Q_{\infty\times\infty}$ introduced in Section \ref{quiv}.  More specifically, we will show that the modified enveloping algebras are isomorphic to the path algebras of $Q_{m,n}$ (for $\mu=\frac{m}{n}$, where $m,n \in \Z$ with $n\neq 0$) and $Q_{\infty\times\infty}$ ($\mu \in \C \setminus \Q$) modulo certain ideals.  We will then be able to use the theory of quiver morphisms of Section \ref{quimorph} to relate these quivers to simpler ones, and we conclude with some statements regarding representations of the Lie algebras $L_{\mu}$.

\subsection{Relation to the Lie Algebras $L_\mu$}

Consider the linear map $\varphi$ determined by:
\begin{align}
\varphi:\C Q_{m,n} &\to \tilde{U}_{m,n}, \nonumber \\
\rho_{j_{s}}^{k+\sigma(j_{1})+\dots+\sigma(j_{s-1})}\cdots \rho_{j_{1}}^{k} &\mapsto \alpha_{j_{s}}\cdots\alpha_{j_{1}}a_{k}.
\end{align}

\begin{lem} \label{homo}
The map $\varphi$ is a homomorphism of algebras.
\end{lem}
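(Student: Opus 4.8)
The plan is to verify that $\varphi$ respects multiplication on a basis of $\C Q_{m,n}$, namely the set of paths, and then extend linearly. A basis for $\C Q_{m,n}$ consists of paths $\tau = \rho_{j_s}^{k+\sigma(j_1)+\dots+\sigma(j_{s-1})}\cdots \rho_{j_1}^k$, and the map $\varphi$ sends such a path to $\alpha_{j_s}\cdots\alpha_{j_1}a_k$. Since $\varphi$ is already declared to be linear, it suffices to check that $\varphi(\tau_2\cdot\tau_1)=\varphi(\tau_2)\varphi(\tau_1)$ for any two paths $\tau_1,\tau_2$, where the product on the left is the path-algebra multiplication (concatenation when $h(\tau_1)=t(\tau_2)$, and $0$ otherwise).

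First I would record what the head and tail of a path are under the indexing conventions of Example \ref{qmn}. A path $\tau_1=\rho_{j_s}^{\,\cdots}\cdots\rho_{j_1}^k$ has tail $k$ and head $k+\sigma(j_1)+\dots+\sigma(j_s)$, since each arrow $\rho_j$ shifts the vertex index by $\sigma(j)$. Thus concatenability $h(\tau_1)=t(\tau_2)$ simply says that $\tau_2$ starts at the vertex $k+\sum_{i=1}^s \sigma(j_i)$. When this holds, $\tau_2\cdot\tau_1$ is again a basis path with tail $k$, and $\varphi(\tau_2\cdot\tau_1)$ is the corresponding ordered product of $\alpha$'s times $a_k$. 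Separately, I would compute the product $\varphi(\tau_2)\varphi(\tau_1)$ directly in $\widetilde U_{m,n}$.

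The key computational step uses the commutation relation \eqref{modrel}, specifically $\alpha_1 a_k = a_{k+m}\alpha_1$ and $\alpha_2 a_k = a_{k+n}\alpha_2$, which can be written uniformly as $\alpha_j a_k = a_{k+\sigma(j)}\alpha_j$. Writing $\varphi(\tau_1)=\alpha_{j_s}\cdots\alpha_{j_1}a_k$ and $\varphi(\tau_2)=\alpha_{i_r}\cdots\alpha_{i_1}a_\ell$ where $\ell=t(\tau_2)$, I would slide the idempotent $a_\ell$ leftward through the block $\alpha_{j_s}\cdots\alpha_{j_1}$ of $\varphi(\tau_1)$: pushing $a_k$ up past these generators produces the idempotent $a_{k+\sum_i\sigma(j_i)}=a_{h(\tau_1)}$ sitting immediately to the left of $\varphi(\tau_1)$. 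Then the orthogonality relation $a_pa_q=\delta_{pq}a_p$ forces the product $\varphi(\tau_2)\varphi(\tau_1)$ to vanish unless $\ell=h(\tau_1)$, exactly matching the case distinction in the path-algebra multiplication. When $\ell=h(\tau_1)$, the two idempotents collapse to one and the $\alpha$'s concatenate in the correct order, giving $\varphi(\tau_2\cdot\tau_1)$.

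The main obstacle — really the only subtle point — is bookkeeping: confirming that the exponent shifts produced by commuting idempotents past strings of $\alpha_1$'s and $\alpha_2$'s agree with the vertex-index shifts $\sigma(j)$ built into the definition of paths in $Q_{m,n}$, and that the third relation $\alpha_1\alpha_2 a_k=\alpha_2\alpha_1 a_k$ is never needed to reorder factors (it is not, since $\varphi$ preserves the left-to-right order of the arrows and imposes no commutativity of the $\alpha$'s at the level of paths). Once the shift arithmetic is matched, the homomorphism property is immediate.
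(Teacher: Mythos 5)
Your proposal is correct and follows essentially the same argument as the paper: check multiplicativity on basis paths, slide the idempotent through the string of $\alpha$'s via $\alpha_j a_k = a_{k+\sigma(j)}\alpha_j$, and let the orthogonality $a_p a_q=\delta_{pq}a_p$ reproduce the concatenation condition of the path algebra. The only difference is cosmetic—the paper pushes the left factor's idempotent rightward through the other factor's $\alpha$'s to land beside $a_\ell$, whereas you push $a_k$ leftward to expose $a_{h(\tau_1)}$—and your remark that the relation $\alpha_1\alpha_2 a_k=\alpha_2\alpha_1 a_k$ is never needed for this lemma (only later, to show $I^{m,n}\subseteq\ker\varphi$) is also accurate.
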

\begin{proof}
Since $\varphi$ is linear, it suffices to show that it commutes with the multiplication. Let 
\begin{equation*}
\rho_{j_{s}}^{k+\sigma(j_{1})+\dots+\sigma(j_{s-1})}\cdots\rho_{j_{1}}^{k},  \rho_{i_{r}}^{\ell+\sigma(i_{1})+\dots+\sigma(i_{r-1})}\cdots\rho_{i_{1}}^{\ell} \in \C Q_{m,n}.
\end{equation*}  
Then
\begin{align*}
&(\rho_{j_{s}}^{k+\sigma(j_{1})+\dots+\sigma(j_{s-1})}\cdots\rho_{j_{1}}^{k})\cdot(\rho_{i_{r}}^{\ell+\sigma(i_{1})+\dots+\sigma(i_{r-1})}\cdots\rho_{i_{1}}^{\ell}) \\
& \quad = \left\{
\begin{array}{cl}
\rho_{j_{s}}^{k+\sigma(j_{1})+\dots+\sigma(j_{s-1})}\cdots\rho_{j_{1}}^{k}\rho_{i_{r}}^{\ell+\sigma(i_{1})+\dots+\sigma(i_{r-1})}\cdots\rho_{i_{1}}^{\ell} & \text{if } \ell+\sigma(i_{1})+\dots+\sigma(i_{r})=k \\
0 & \text{otherwise }
\end{array} \right. \\
\implies &\varphi\left((\rho_{j_{s}}^{k+\sigma(j_{1})+\dots+\sigma(j_{s-1})}\cdots\rho_{j_{1}}^{k})\cdot(\rho_{i_{r}}^{\ell+\sigma(i_{1})+\dots+\sigma(i_{r-1})}\cdots\rho_{i_{1}}^{\ell})\right)\\
&\quad = \left\{
\begin{array}{cl}
\alpha_{j_{s}}\cdots\alpha_{j_{1}}\alpha_{i_{r}}\cdots\alpha_{i_{1}}a_\ell & \text{if } \ell+\sigma(i_{1})+\dots+\sigma(i_{r})=k, \\
0 & \text{otherwise. }
\end{array} \right.
\end{align*}
On the other hand,
\begin{align*}
\varphi& \left(\rho_{j_{s}}^{k+\sigma(j_{1})+\dots+\sigma(j_{s-1})}\cdots\rho_{j_{1}}^{k}\right)\varphi\left(\rho_{i_{r}}^{\ell+\sigma(i_{1})+\dots+\sigma(i_{r-1})}\cdots\rho_{i_{1}}^{\ell}\right) \\
&\quad =(\alpha_{j_{s}}\cdots\alpha_{j_{1}}a_k)(\alpha_{i_{r}}\cdots\alpha_{i_{1}}a_{\ell}) \\
&\quad =\alpha_{j_{s}}\cdots\alpha_{j_{1}}\alpha_{i_{r}}a_{k-\sigma(i_{r})}\alpha_{i_{r-1}}\cdots\alpha_{i_{1}}a_\ell \\
&\quad =\alpha_{j_{s}}\cdots\alpha_{j_{1}}\alpha_{i_{r}}\alpha_{i_{r-1}}a_{k-\sigma(i_{r})-\sigma(i_{r-1})}\alpha_{i_{r-2}}\cdots\alpha_{i_{1}}a_\ell \\
&\quad \: \: \vdots \\
&\quad =\alpha_{j_{s}}\cdots\alpha_{j_{1}}\alpha_{i_{r}}\cdots\alpha_{i_{1}}a_{k-\sigma(i_{r})-\dots-\sigma(i_{1})}a_\ell \\
&\quad =\left\{
\begin{array}{cl}
\alpha_{j_{s}}\cdots\alpha_{j_{1}}\alpha_{i_{r}}\cdots\alpha_{i_{1}}a_\ell & \text{if } \ell=k-\sigma(i_{1})-\dots-\sigma(i_{r}), \\
0 & \text{otherwise. }
\end{array} \right.
\end{align*}
And hence
\begin{align*}
& \varphi\left((\rho_{j_{s}}^{k+\sigma(j_{1})+\dots+\sigma(j_{s-1})}\cdots\rho_{j_{1}}^{k})\cdot(\rho_{i_{r}}^{\ell+\sigma(i_{1})+\dots+\sigma(i_{r-1})}\cdots\rho_{i_{1}}^{\ell})\right)\\
& \qquad = \varphi\left(\rho_{j_{s}}^{k+\sigma(j_{1})+\dots+\sigma(j_{s-1})}\cdots\rho_{j_{1}}^{k}\right)\varphi\left(\rho_{i_{r}}^{\ell+\sigma(i_{1})+\dots+\sigma(i_{r-1})}\cdots\rho_{i_{1}}^{\ell}\right). \qedhere
\end{align*}
\end{proof}
We now establish a precise relationship between the path algebra $\C Q_{m,n}$ and the modified enveloping algebra $\widetilde{U}_{m,n}$.
\begin{prop} \label{ratiso}
For any $m,n \in \Z^*$ there is an isomorphism of algebras $\widetilde{U}_{m,n}\cong \C Q_{m,n}/I^{m,n}$, where $I^{m,n}$ is the two-sided ideal generated by the elements $\rho_{1}^{k+n}\rho_{2}^{k}-\rho_{2}^{k+m}\rho_{1}^{k}$ for $k\in\Z$.
\end{prop}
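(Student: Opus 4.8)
The plan is to turn the homomorphism $\varphi\colon \C Q_{m,n}\to \widetilde{U}_{m,n}$ of Lemma \ref{homo} into the desired isomorphism via the first isomorphism theorem. First I would show that $I^{m,n}\subseteq \ker\varphi$, so that $\varphi$ descends to a surjection $\bar\varphi\colon \C Q_{m,n}/I^{m,n}\to \widetilde{U}_{m,n}$, and then prove that $\bar\varphi$ is injective by matching an explicit spanning set of the quotient against a vector-space basis of $\widetilde{U}_{m,n}$. The upshot is that $\ker\varphi = I^{m,n}$.

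I would begin by recording a convenient basis of $\widetilde{U}_{m,n}$. From the decomposition \eqref{modenv} together with the relations \eqref{modrel} (which let one move every idempotent to the right of the $\alpha$'s), the set $\{\alpha_1^p\alpha_2^q a_k : p,q\in\N,\ k\in\Z\}$ is a $\C$-basis of $\widetilde{U}_{m,n}$. Surjectivity of $\varphi$ is then immediate: the basis vector $\alpha_1^p\alpha_2^q a_k$ is the image of the path issuing from the vertex $k$ that traverses $q$ arrows of type $2$ followed by $p$ arrows of type $1$. The containment $I^{m,n}\subseteq\ker\varphi$ is a one-line computation: the generator $\rho_1^{k+n}\rho_2^k-\rho_2^{k+m}\rho_1^k$ maps to $\alpha_1\alpha_2 a_k-\alpha_2\alpha_1 a_k$, which vanishes by the last relation in \eqref{modrel}. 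Since $\ker\varphi$ is a two-sided ideal, this produces the factorization $\bar\varphi$.

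The crux is the reverse inclusion $\ker\varphi\subseteq I^{m,n}$, for which I would exhibit a spanning set of $\C Q_{m,n}/I^{m,n}$ matching the basis above. The key point is that each generator relates two paths sharing a tail and a head: both $\rho_1^{\ell+n}\rho_2^\ell$ and $\rho_2^{\ell+m}\rho_1^\ell$ run from $\ell$ to $\ell+m+n$, differing only in the order of one type-$1$ and one type-$2$ arrow. Consequently, modulo $I^{m,n}$ one may transpose any two consecutive arrows of different type inside a path. Performing such transpositions repeatedly (a bubble sort on the sequence of arrow types) reduces every path issuing from a vertex $k$ to a \emph{normal form} in which all type-$2$ arrows are traversed before all type-$1$ arrows; once this type-order is fixed, the superscripts of the arrows are forced by the tail-equals-head constraints, so the normal form is determined solely by the triple $(k,p,q)$, where $p$ and $q$ count the type-$1$ and type-$2$ arrows. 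Hence the normal-form paths — one for each $(k,p,q)\in\Z\times\N\times\N$, with the trivial paths $e_k$ corresponding to $p=q=0$ — span $\C Q_{m,n}/I^{m,n}$.

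Finally I would close with a counting argument. The map $\bar\varphi$ sends the normal-form path attached to $(k,p,q)$ exactly to the basis vector $\alpha_1^p\alpha_2^q a_k$, so it carries the spanning set of the previous paragraph bijectively onto the basis of $\widetilde{U}_{m,n}$. A surjective linear map that sends a spanning set bijectively onto a basis is necessarily an isomorphism: if a finite $\C$-linear combination of normal forms lies in $\ker\bar\varphi$, applying $\bar\varphi$ and invoking linear independence of the $\alpha_1^p\alpha_2^q a_k$ forces every coefficient to vanish. Thus $\bar\varphi$ is injective, the normal forms are in fact a basis of the quotient, and $\ker\varphi=I^{m,n}$, yielding $\widetilde{U}_{m,n}\cong \C Q_{m,n}/I^{m,n}$. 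I expect the only genuine work to lie in the third step — checking carefully that the transposition relation suffices to sort an arbitrary path into normal form and that this normal form depends on nothing beyond $(k,p,q)$; the remaining steps are bookkeeping. Note that $\gcd(m,n)=1$ is never invoked, which is consistent with the statement being asserted for arbitrary $m,n\in\Z^*$.
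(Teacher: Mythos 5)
Your proposal is correct and follows essentially the same route as the paper: both rest on the computation $I^{m,n}\subseteq\ker\varphi$ together with the observation that the commutation relation lets one reorder arrows so that every path is congruent modulo $I^{m,n}$ to a normal form matching the PBW-type basis $\alpha_1^r a_k \alpha_2^s$ of $\widetilde{U}_{m,n}$. The only difference is packaging: the paper exhibits an explicit inverse $\psi$ (your normal-form map) and checks that $\psi\bar\varphi$ and $\bar\varphi\psi$ are identities, whereas you run a spanning-set-onto-basis counting argument; the sorting step you flag as the only genuine work is exactly the step the paper invokes when it "reorders the terms into a new member of the same equivalence class."
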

\begin{proof}
We claim that $I^{m,n} \subseteq \ker\varphi$.  Indeed:
\begin{align}
\varphi(\rho_{1}^{k+n}\rho_{2}^{k}-\rho_{2}^{k+m}\rho_{1}^{k})&=\varphi(\rho_{1}^{k+n}\rho_{2}^{k})-\varphi(\rho_{2}^{k+m}\rho_{1}^{k}) \nonumber \\
&=\alpha_1\alpha_2a_k-\alpha_2\alpha_1a_k \nonumber\\
&=\alpha_1\alpha_2a_k-\alpha_1\alpha_2a_k \tag{by \eqref{modrel}}\\
&=0 \nonumber 
\end{align}
Thus $\varphi$ induces a morphism
\begin{equation*}
\bar{\varphi}:\C Q_{m,n}/I^{m,n} \to \tilde{U}, \quad
x+I \mapsto \varphi(x) \qquad \forall x \in \C Q_{m,n}.
\end{equation*}
We will also consider the linear map determined by:
\begin{align*}
\psi:\widetilde{U} &\to \C Q_{m,n}/I^{m,n} \\
{\alpha_{1}}^{r}a_k{\alpha_{2}}^{s} &\mapsto \rho_{1}^{k+(r-1)m}\cdots\rho_{1}^{k}\rho_{2}^{k-n}\cdots\rho_{2}^{k-sn}+I.
\end{align*}
We will show that $\psi\bar{\varphi}$ and $\bar{\varphi}\psi$ are identity maps. Seeing as both maps are linear, it will suffice to show that this is the case for basis elements. Let $\rho_{j_{s}}^{k+\sigma(j_{1})+\dots+\sigma(j_{s-1})}\cdots\rho_{j_{1}}^{k}+I \in \C Q/I^{m,n}$.  Then
\begin{align*}
(\psi\bar{\varphi})\left(\rho_{j_{s}}^{k+\sigma(j_{1})+\dots+\sigma(j_{s-1})}\cdots\rho_{j_{1}}^{k}+I\right)&=\psi\left(\bar{\varphi}(\rho_{j_{s}}^{k+\sigma(j_{1})+\dots+\sigma(j_{s-1})}\cdots\rho_{j_{1}}^{k}+I)\right) \\
&=\psi\left(\varphi(\rho_{j_{s}}^{k+\sigma(j_{1})+\dots+\sigma(j_{s-1})}\cdots\rho_{j_{1}}^{k})\right) \\
&=\psi(\alpha_{j_{s}}\cdots\alpha_{j_{1}}a_k) \\
&=\psi(\alpha_1^r\alpha_2^ta_k), \quad \text{for some } r+t=s \\
&=\psi(\alpha_1^ra_{k+tn}\alpha_2^t) \\
&=\rho_1^{k+tn+(r-1)m}\cdots \rho_1^{k+tn}\rho_2^{k+(t-1)n}\cdots \rho_2^k +I \\
&=\rho_{j_{s}}^{k+\sigma(j_{1})+\dots+\sigma(j_{s-1})}\cdots\rho_{j_{1}}^{k}+I \\
\\
\implies \psi\bar{\varphi}&=\id:\C Q_{m,n}/I^{m,n} \to \C Q_{m,n}/I^{m,n}.
\end{align*}
In the last line of the computation we used the relation $\rho_1^{k+n} \rho_2^k - \rho_2^{k+m} \rho_1^k \in I^{m,n}$ to reorder the terms into a new member of the same equivalence class.

Now let ${\alpha_1}^{r}a_k{\alpha_2}^{s} \in \widetilde{U}$.  Then
\begin{align*}
(\bar{\varphi}\psi)({\alpha_{1}}^{r}a_k{\alpha_{2}}^{s})&=\bar{\varphi}\left(\psi({\alpha_{1}}^{r}a_k{\alpha_{2}}^{s})\right) \\
&=\bar{\varphi}\left(\rho_{1}^{k+(r-1)m}\cdots\rho_{1}^{k}\rho_{2}^{k-n}\cdots\rho_{2}^{k-sn}+I\right) \\
&=\varphi\left(\rho_{1}^{k+(r-1)m}\cdots\rho_{1}^{k}\rho_{2}^{k-n}\cdots\rho_{2}^{k-sn}\right) \\
&={\alpha_{1}}^{r}{\alpha_{2}}^{s}a_{k-sn} \\
&={\alpha_{1}}^{r}a_k{\alpha_{2}}^{s} \\
\\
\implies \bar{\varphi}\psi &=\id:\widetilde{U} \to \widetilde{U}.
\end{align*}

Combining this result with Lemma \ref{homo}, we see that $\bar{\varphi}$ is a bijective homomorphism of algebras, which completes the proof.
\end{proof}
By Proposition \ref{ratiso}, there is an equivalence of categories $\widetilde U_{m,n}\Md \cong \rep(Q_{m,n}, R_{m,n})$, where $R_{m,n}$ is the set of relations of the form $\rho_{1}^{k+n}\rho_{2}^{k}-\rho_{2}^{k+m}\rho_{1}^{k}$, $k\in\Z$, in $Q_{m,n}$.  One can show similarly that when $\mu \in \C\backslash \Q$ there is an equivalence of categories $\widetilde U_\mu \Md \cong \Rep(Q_{\infty\times\infty},R_{\infty\times\infty)}$, where $R_{\infty\times\infty}$ denotes the set of relations of the form $\rho_{1}^{i,j+1}\rho_{2}^{ij}-\rho_{2}^{i+1,j}\rho_{1}^{ij}$, $i,j\in\Z$, in $Q_{\infty\times\infty}$.

\subsection{The Quiver $Q_{m,n}$}\label{rationalfunctor}

Let $\mu=\frac{n}{m}$, $\gcd(m,n)=1$.   We define $\widehat Q_s$ as in Example \ref{morphg}.  If we also define $\widehat R_{m+n}=\{ \bar \rho_{i+1} \rho_i - \rho_{i-1}\bar \rho_{i} \ |\ i \in \Z/s\Z \}$, then we can study the representations of $\widetilde U_{m,n}$ by relating the category $\rep(Q_{m,n},R_{m,n})$ to the category $\rep(\widehat Q_{m+n}, \widehat R_{m+n})$.  This is an interesting connection, and moreover much is known about the category $\rep(\widehat Q_{m+n}, \widehat R_{m+n})$, see \cite{SavStatMech} for example.

Let $V=(V(k),V(\rho^k_i))\in \rep(Q_{m,n})$ and let $j \in \Z/(m+n)\Z$.  If $k \in \Z$, we will write $k \equiv j \mod(m+n)$ simply as $k \equiv j$.  Then if $g \in \Hom(Q_{m,n},\widehat Q_{m+n})$ is the morphism described in Example \ref{morphg}, the representation $g_!(V)$ has vector spaces given by
\begin{equation*} \textstyle
g_!(V)(j)=\bigoplus_{k \equiv jm}V_k.
\end{equation*}

The linear maps of the representation are given by $g_!(V)(\rho_j)=\bigoplus_{k \equiv jm} V(\rho_1^k)$ and $g_!(V)(\bar\rho_j)=\bigoplus_{k \equiv jm} V(\rho_2^k)$.  Note that $g_!(V)(\rho_j)$ maps $g_!(V)(j)$ to $g_!(V)(j+1)$, and $g_!(V)(\bar\rho_j)$ maps $g_!(V)(j)$ to $g_!(V)(j-1)$.  Moreover, if $V,U \in \Rep(Q_{m,n})$ and $\varphi \in \Hom_{Q_{m,n}}(V,U)$ then $g_!(\varphi)=\{g_!(\varphi)(j)\colon g_!(V)(j)\to g_!(U)(j)\}$, where $g_!(\varphi)(j)=\bigoplus_{k\equiv jm}\varphi_k$.

Let $V,U \in \rep(Q_{m,n})$.  Then $g_!(V) \ncong g_!(U) \Rightarrow V \ncong U$ since $g_!$ is a functor.  Further, it follows from Corollary \ref{adjointcor} that if $g_!(V)$ is indecomposable then $V$ is indecomposable, since additive functors preserve finite coproducts.

Note that the preimage of the relations $\widehat R_{m+n}$ in $\widehat Q_{m+n}$ under $g_!$ are exactly the relations $R_{m,n}$, that is, $g_!^{-1}(\widehat R_{m+n})=R_{m,n}$.  Thus, by Proposition \ref{respectrelations}, we can restrict the functor $g_!$ to the subcategory $\rep(Q_{m,n}, R^{m,n})$ of $\rep(Q_{m,n})$ to get a functor $g_!^{\widehat R_{m+n}}: \rep(Q_{m,n}, R_{m,n}) \to \rep(\widehat Q_{m+n}, \widehat R_{m+n})$.  Furthermore, since $g_!$ is additive, faithful, and exact, so too is $g_!^{\widehat R_{m+n}}$, and we can therefore relate the categories $\rep(\widetilde U_{m,n})$ and $\rep(\widehat Q_{m+n}, \widehat R_{m+n})$.  It is natural to ask whether or not $g_!^{\widehat R_{m+n}}$ gives an equivalence of categories, and it turns out that this is true only when $\mu = -1$,  in which case $g_!$ is the identity functor.  When $\mu \neq -1$, the functor $g_!^{\widehat R_{m+n}}$ is neither full nor essentially surjective, as the following examples illustrate:

\begin{example}
Let $V \in \rep(Q_{m,n},R_{m,n})$ be the representation given by $V_0=V_{n+m}=\C$, and $V_i=0$ for all other $i \in \Z$.  The endomorphism space of $V$ is $\Hom_{Q_{m,n}}(V,V) \cong \Hom(\C, \C) \oplus \Hom(\C,\C) \cong \C^2$.  The representation $g_!^{\widehat R_{m+n}}(V) \in \rep(\widehat Q_{m+n}, \widehat R_{m+n})$ is the representation such that $g_!^{\widehat R_{m+n}}(V)_0= \C^2$, and all other vector spaces are zero.  Thus $\Hom_{\widehat Q_{m+n}}(g_!^{\widehat R_{m+n}}(V),g_!^{\widehat R_{m+n}}(V)) \cong \Hom(\C^2, \C^2) \cong \Mat_{2 \times 2}(\C)$.  Since the dimension of the endomorphism space of $g_!^{\widehat R_{m+n}}(V)$ is greater than the dimension of the endomorphism space of $V$, the map $g_{!VV}^{\widehat R_{m+n}}$ is not surjective.  Hence $g_!^{\widehat R_{m+n}}$ is not full.
\end{example}

\begin{example}
Let $U \in \rep(Q_{m,n},R_{m,n})$. Then if $U$ is finite dimensional, there are only finitely many nonzero $U_k$, and so there exists some $t \in \Z$ such that \[U(\rho_1^{k+tm}\rho_1^{k+(t-1)m}\cdots \rho_1^k)=0\] for any $k \in \Z$.  Then
$\bigoplus_{k \equiv jm} U(\rho_1^{k+tm}\rho_1^{k+(t-1)m}\cdots \rho_1^k)=0$, and so there is a path in the representation $g_!^{\widehat R_{m+n}}(U) \in \rep(\widehat Q_{m+n}, \widehat R_{m+n})$ that acts by zero.  Consider the representation $V \in \rep(\widehat Q_{m+n}, \widehat R_{m+n})$ defined by $(V_i, x_i, \bar x_i)=(\C, \lambda, 1)$ for all $i \in \Z/(m+n)\Z$, where $\lambda \in \C^*$.  Suppose there were some representation $U \in \rep(Q_{m,n}, R^{m,n})$ such that $g_!^{\widehat R_{m+n}}(U) \cong V$.  Then since $V$ is finite dimensional, $U$ must also be finite dimensional.  However, since there are no paths in the representation $V$ that act by zero, this is a contradiction.  Hence $g_!^{\widehat R_{m+n}}$ is not essentially surjective.
\end{example}

While the category $\Rep(Q_{m,n},R_{m,n})$ can be quite difficult to study in general, if we restrict our attention to representations which are supported on certain numbers of vertices, we can obtain some results about the representation theory of the Lie algebras $L_{\mu}$.  For any $m,n \in \Z$ and any $a,b \in \Z$ with $a<b$, we will denote by $\ml C^{m,n}_{a,b}$ the full subcategory of $\widetilde U_{m,n}\Md$ consisting of modules $V$ such that $V_k=0$ whenever $k<a$ or $k>b$ (see \eqref{weightspace1}).  
\begin{theo}\label{m=1}
  Let $m\in\Z$ and let $a,b\in\Z$ be such that $0\leq b-a\leq m$.   Then $\ml C^{m,1}_{a,b}$ is of tame representation type.
\end{theo}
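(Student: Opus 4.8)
The plan is to reduce the problem to the representation theory of a finite quiver and then read off the answer from the classification of hereditary algebras by representation type. By Proposition \ref{ratiso} we have an equivalence $\widetilde U_{m,1}\Md \cong \rep(Q_{m,1},R_{m,1})$, under which $\ml C^{m,1}_{a,b}$ corresponds to the full subcategory of those $V\in\rep(Q_{m,1},R_{m,1})$ with $V(k)=0$ for all $k\notin\{a,a+1,\dots,b\}$. First I would argue that this subcategory is equivalent to $\rep(Q',R')$, where $Q'$ is the full subquiver of $Q_{m,1}$ on the vertex set $\{a,\dots,b\}$ and $R'$ is the subset of $R_{m,1}$ whose paths lie entirely in $Q'$: a representation supported on $\{a,\dots,b\}$ assigns the zero map to every arrow having an endpoint outside this set, so it carries exactly the data of a representation of $Q'$. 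Thus the representation type of $\ml C^{m,1}_{a,b}$ coincides with that of $\C Q'$.

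Next I would determine $Q'$ and $R'$ explicitly. For $n=1$ the arrow $\rho_2^k$ runs $k\to k+1$ and $\rho_1^k$ runs $k\to k+m$, so the arrows of $Q'$ are the short arrows $\rho_2^a,\dots,\rho_2^{b-1}$, forming the chain $a\to a+1\to\cdots\to b$, together with those $\rho_1^k$ satisfying $a\le k$ and $k+m\le b$. Since $0\le b-a\le m$ this forces $k=a$ and $b-a=m$; hence there is a single long arrow $\rho_1^a\colon a\to b$ precisely when $b-a=m$, and none otherwise. I would then check $R'=\varnothing$: in each relation $\rho_1^{k+1}\rho_2^k-\rho_2^{k+m}\rho_1^k$, both terms contain an arrow whose head or tail leaves $\{a,\dots,b\}$, so each is sent to the zero map on any representation supported there. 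Because every arrow strictly increases the vertex label, $Q'$ has no oriented cycles, so $\C Q'\cong\C Q'/\langle R'\rangle$ is a finite-dimensional hereditary algebra and $\ml C^{m,1}_{a,b}\cong\C Q'\Md$.

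Finally I would classify the type from the underlying graph of $Q'$. When $b-a<m$ there is no long arrow, so $Q'$ is a linearly oriented quiver whose underlying graph is the Dynkin diagram $A_{b-a+1}$; by Gabriel's theorem $\C Q'$ has finite representation type, which is in particular tame. When $b-a=m$, the quiver $Q'$ has $m+1$ vertices, each of degree two (the endpoints $a,b$ each meet the chain and the long arrow, while interior vertices meet two chain arrows), so its underlying graph is the $(m+1)$-cycle, i.e.\ the Euclidean diagram $\tilde A_m$; since $a$ is a source and $b$ is a sink the orientation is not cyclic, and by the Donovan--Freislich and Nazarova classification of tame hereditary algebras $\C Q'$ is of tame representation type. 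In either case $\ml C^{m,1}_{a,b}$ is tame.

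The main obstacle I anticipate is the bookkeeping of the two middle steps: carefully verifying that the induced relations are genuinely vacuous on the support, so that no surviving relation can alter the hereditary classification, and correctly pinning down the orientation of the $\tilde A_m$ quiver, since tameness of an $\tilde A_m$ path algebra fails exactly for the cyclically oriented quiver. Here one must confirm the presence of the source $a$ and sink $b$, i.e.\ that the single long arrow runs the same way as the chain and so closes the cycle non-cyclically. A minor but necessary point is to record the convention that finite representation type is subsumed under tame, which lets the single conclusion cover both the range $b-a<m$ and the boundary case $b-a=m$.
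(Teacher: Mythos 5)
Your proposal is correct and follows essentially the same route as the paper's proof: pass through the equivalence $\widetilde U_{m,1}\Md \cong \rep(Q_{m,1},R_{m,1})$, observe that the relations $R_{m,1}$ are vacuous on representations supported on at most $m+1$ consecutive vertices, identify the support quiver as one whose underlying graph is of type $A_{b-a+1}$ or $\widetilde A_m$, and invoke the classification of tame hereditary algebras. Your version is in fact more careful than the paper's terse argument, notably in verifying the acyclic orientation of the $\widetilde A_m$ cycle (source at $a$, sink at $b$) and in separating the case $b-a<m$, where the quiver is Dynkin of type $A$ and finite type suffices.
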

\begin{proof}
  Note that any representation $V\in \Rep(Q_{m,1})$ which is supported on at most $m+1$ consecutive vertices automatically satisfies the relations $R_{m,1}$ in a trivial way.  Thus we may identify representations $V\in \ml C^{m,n}_{a,b}$ with representations of a quiver whose underlying graph is an extended Dynkin diagram of type $\hat A_m$.  Quivers with underlying graphs of type $\hat A_m$ are of tame representation type, so the result then follows from the equivalence $\widetilde{U}_{m,1}\Md \cong \Rep(Q_{m,1},R_{m,1})$.
\end{proof}
\begin{theo}\label{mneq1}
  Let $m,n \in\Z$, $\gcd(m,n)=1$, $n\neq 1$.  Let $a,b \in Z$ be such that $0\leq b-a\leq m$.  Then $\ml C^{m,n}_{a,b}$ is of finite representation type.  
\end{theo}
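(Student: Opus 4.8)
The plan is to transport the problem to quiver representations and read off the underlying graph, exactly as in the proof of Theorem~\ref{m=1}. By Proposition~\ref{ratiso} and the ensuing equivalence $\widetilde U_{m,n}\Md\cong\rep(Q_{m,n},R_{m,n})$, the category $\ml C^{m,n}_{a,b}$ is equivalent to the full subcategory of $\rep(Q_{m,n},R_{m,n})$ of representations supported on the $N+1$ vertices $\{a,a+1,\dots,b\}$, where $N:=b-a$ satisfies $0\le N\le m$; this is the same as $\rep(Q',R')$, where $Q'$ is the full subquiver of $Q_{m,n}$ on these vertices and $R'$ is the induced set of relations. First I would record which arrows survive. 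An arrow $\rho_1^k\colon k\to k+m$ lies in $Q'$ only if $a\le k$ and $k+m\le b$, which together with $N\le m$ forces $k=a$ and $N=m$; thus $Q'$ contains the single long arrow $\rho_1^a\colon a\to b$ precisely when $N=m$, and no $\rho_1$-arrow otherwise. The surviving arrows $\rho_2^k\colon k\to k+n$ connect pairs of vertices differing by $|n|$, so their underlying graph is a disjoint union of linear $A$-type Dynkin diagrams, one path for each residue class modulo $|n|$ meeting $\{a,\dots,b\}$.

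The bulk of the cases then follow from Gabriel's theorem. If $N<m$ there is no $\rho_1$-arrow, so the underlying graph of $Q'$ is Dynkin of type $A$; hence $\rep(Q')$ has finitely many indecomposables, and a fortiori so does its full subcategory $\rep(Q',R')$, since imposing relations can only discard indecomposables. If $N=m$ but $|n|\ge 2$, then $\gcd(m,|n|)=1$ together with $|n|\ge 2$ gives $|n|\nmid m=b-a$, so $a\not\equiv b\pmod{|n|}$; the extra edge $\{a,b\}$ coming from $\rho_1^a$ therefore joins the two degree-$\le1$ endpoints $a$ and $b$ of two \emph{distinct} $\rho_2$-paths, producing once more a disjoint union of type-$A$ diagrams with no cycle. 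In both situations $\ml C^{m,n}_{a,b}$ is of finite representation type.

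The one remaining, and genuinely delicate, case is $N=m$ with $n=-1$, and this is the main obstacle. Here $|n|=1$, so the $\rho_2$-arrows form a single path $b\to b-1\to\cdots\to a$ and the arrow $\rho_1^a\colon a\to b$ closes it into an oriented $(m+1)$-cycle whose underlying graph is the extended Dynkin diagram $\widehat A_m$; ignoring relations this would be tame, so finiteness must come from $R'$. The key computation, which I would carry out by checking for each $k$ whether a relation of $R_{m,n}$ has all of its vertices inside $\{a,\dots,b\}$, is that exactly two relations survive: the two length-two paths abutting $\rho_1^a$, namely $\rho_1^a\rho_2^{a+1}=0$ and $\rho_2^{b}\rho_1^a=0$. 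Thus $\rho_1^a$ cannot be composed with any arrow on either side, whereas all $\rho_2$-$\rho_2$ compositions along the path remain free. This is precisely the opposite of the case $n=1$ of Theorem~\ref{m=1}, where every relation of $R_{m,n}$ has a vertex outside the support, $R'$ is trivial, and the bare cycle $\widehat A_m$ gives tame type; the contrast explains the hypothesis $n\neq1$.

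To finish this case I would recognize $(Q',R')$ as a string (special biserial) algebra — each vertex of the cycle has exactly one incoming and one outgoing arrow, and $R'$ consists of two monomial relations — and check that it admits no bands: since neither one-sided extension of $\rho_1^a$ is an admissible subpath, the letter $\rho_1^a$ is a maximal string on its own and cannot occur in any cyclic string, while every string avoiding $\rho_1^a$ lies inside the linear $A$-type subquiver of $\rho_2$-arrows. By the Butler--Ringel description of modules over string algebras, the absence of bands forces finitely many indecomposables, so $\ml C^{m,-1}_{a,b}$ is of finite representation type. Equivalently, and more elementarily, one can argue directly that in any indecomposable $V$ the relations force either $\rho_1^a$ to act as zero — whence $V$ is an interval module for the $\rho_2$-path, of which there are finitely many — or $V$ to be the two-dimensional module supported on $\{a,b\}$ with $\rho_1^a$ an isomorphism; either way only finitely many isomorphism classes occur. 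I expect verifying the no-bands (equivalently, the direct classification) claim to be the only nonroutine part of the argument.
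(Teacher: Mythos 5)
Your proof is correct, and it is strictly more careful than the paper's own argument, which is essentially one line: it asserts that every representation supported on at most $m+1$ consecutive vertices trivially satisfies the relations $R_{m,n}$, identifies $\ml C^{m,n}_{a,b}$ with representations of a quiver whose underlying graph is a Dynkin diagram of type $A$, and cites Gabriel's theorem. That is exactly your cases $N<m$ and $N=m$, $n\geq 2$: for $n>0$ both paths in each relation join $k$ to $k+m+n$ and $m+n>b-a$, so both assertions hold and the two arguments coincide. However, the theorem as stated allows $n<0$ (and must, if it is to cover the negative rational values of $\mu$, such as the Euclidean algebra $\mu=-1$, i.e.\ $(m,n)=(1,-1)$), and there the paper's proof has a genuine gap precisely where you located one: when $n=-1$ and $b-a=m$, the relations at $k=a$ and $k=a+1$ survive as the monomial relations $\rho_2^{b}\rho_1^{a}=0$ and $\rho_1^{a}\rho_2^{a+1}=0$, and the supported subquiver is the oriented cycle $\hat A_m$, not a type $A$ diagram, so neither of the paper's assertions holds and its conclusion does not follow as written. (For $n\leq -2$ with $b-a=m$ the ``trivially satisfies'' claim also fails, but, as you observe, harmlessly: relations only discard indecomposables, and the underlying graph is still a disjoint union of type $A$ diagrams because $a\not\equiv b\pmod{|n|}$.) Your resolution of the cyclic case is sound in both versions: the no-bands argument works because any cyclic string would have to traverse $\rho_1^a$, which admits no admissible composition on either side; and the elementary splitting argument works because the relations give $\im V(\rho_1^a)\subseteq\ker V(\rho_2^{b})$ and $\im V(\rho_2^{a+1})\subseteq\ker V(\rho_1^a)$, so a complement of $\ker V(\rho_1^a)$ in $V(a)$ together with $\im V(\rho_1^a)$ splits off as a direct sum of copies of the two-dimensional module supported on $\{a,b\}$. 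In short, you reproduce the paper's proof where it is valid and supply the missing case $n=-1$; what your extra work buys is a proof of the theorem in the generality in which it is actually stated.
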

\begin{proof}
  As previously noted, any such representation $V$ trivially satisfies the relations $R_{m,n}$.  Thus we may identify $V$ with a representation of a quiver whose underlying graph is a Dynkin diagram of type $A$, and so the result follows from the equivalence $\widetilde U_{m,n}\Md \cong \Rep(Q_{m,n},R_{m,n})$.  
\end{proof}

\subsection{The Quiver $Q_{\infty \times \infty}$}

When $\mu \in \C \setminus \Q$ we are interested in representations of the quiver $Q_{\infty \times \infty}$ introduced in Example \ref{qinf}.  In order to study the representations of $Q_{\infty \times \infty}$ we will relate the category $\rep(Q_{\infty \times \infty})$ to the category $\rep (Q_{\infty})$.  Here $Q_{\infty}$ denotes the quiver from Example \ref{morphf}. We will then be able to relate the category $\rep(Q_{\infty \times \infty}, R_{\infty \times \infty})$ to the category $\rep(Q_{\infty}, R_{\infty})$, where $R_{\infty}=\{\bar\rho_{i+1} \rho_i - \rho_{i-1}\bar \rho_{i} \ |\ i \in \Z\}$. We will obtain a relationship between these two categories which is similar to the relationship we studied in Section \ref{rationalfunctor}.  The representation theory of the quiver $Q_{\infty}$ subject to relations $R_{\infty}$ has been studied in \cite[Section 4]{SavEuc}.

\begin{remark}
Given any representation $V\in \Rep(Q_{\infty\times\infty},R_{\infty\times\infty})$ and any $i\in \Z$, we can consider the representation of the quiver of type $A_\infty$ given by the $i^{\text{th}}$ row of $V$, that is, the representation $V_i \in \Rep(A_\infty)$ with $V_i(j)=V(i,j)$.  Then the fact that $V$ satisfies the relations $R_{\infty\times\infty}$ implies that the collection $\{V(\rho_2^{ij})\ |\ j\in\Z\}$ defines a morphism of $A_\infty$ representations $V_i\to V_{i+1}$.  Thus we may think of elements of $\Rep(Q_{\infty\times\infty},R_{\infty\times\infty})$ as chains of representations of the quiver of type $A_\infty$.
\end{remark}

Let $V$ be a representation of the quiver $Q_{\infty \times \infty}$ and let $f\in \Hom(Q_{\infty \times \infty},Q_\infty)$ be the morphism described in Example \ref{morphf}.  Then the functor $f_!$ has vector spaces given by
\begin{equation*} \textstyle
f_!(V)(k):=\bigoplus_{i-j=k} V(i,j).
\end{equation*}

The linear maps between these spaces are given by $f_!(V)(\rho_k)=\bigoplus_{i-j=k}V(\rho^{ij}_1)$ and $f_!(V)(\bar \rho_k)=\bigoplus_{i-j=k}V(\rho_2^{ij})$.  Note that $f_!(V)(\rho_k)$ maps $f_!(V)(k)$ to $f_!(V)(k+1)$ and $f_!(\bar\rho_k)$ maps $f_!(V)(k)$ to $f_!(V)(k-1)$.  The functor $f_!$ acts on morphisms of $\Rep(Q_{\infty\times\infty})$ as follows: if $\varphi=\{\varphi(i,j)\}$ is a morphism between two representations $V$ and $U$ of $Q_{\infty \times \infty}$, where $\varphi(i,j)\colon V(i,j)\to U(i,j)$, then $f_!(\varphi)=\{f_!(\varphi)(k)\}$, where $f_!(\varphi)(k)=\bigoplus_{i-j=k}\varphi(i,j)$.

Once again Corollary \ref{adjointcor} implies that $f_!$ is an additive functor.  If two objects $f_!(V), f_!(U) \in \rep(Q_{\infty})$ are non-isomorphic, the objects $V,U \in \rep(Q_{\infty \times \infty})$ must be non-isomorphic.  Also, if an object $f_!(V) \in \rep(Q_{\infty})$ is indecomposable, then the object $V \in \rep(Q_{\infty \times \infty})$  is also indecomposable, as in the rational case.

The relations $R_\infty$ are the Gelfand-Ponomarev relations in $Q_\infty$. The functor $f_!$ can be restricted to the subcategory $\rep(Q_{\infty \times \infty}, R_{\infty \times \infty})$ of $\rep(Q_{\infty \times \infty})$ to yield a functor $f_!^{R_\infty}:\rep(Q_{\infty \times \infty}, R_{\infty \times \infty}) \to \rep(Q_{\infty}, R_{\infty})$, and this restricted functor is additive, faithful, and right exact. However, the following examples illustrate that $f_!^{R_\infty}$ is neither full nor essentially surjective:
\begin{example}
Consider the following representation $V \in \rep(Q_{\infty \times \infty}, R_{\infty \times \infty})$:
\begin{center}
$\begin{CD}
0 @>>> \C \\
@AAA @AAA \\
\C @>>> 0,
\end{CD}$
\end{center}
where all vector spaces not shown in the diagram are zero, and $V(0,0)=V(1,1)=\C$.  The endomorphism space of $V$ is given by $\Hom_{Q_{\infty \times \infty}}(V,V) \cong \Hom(\C, \C) \oplus \Hom(\C, \C)\cong \C^2$.  The object $f_!^{R_\infty}(V)$ is the representation of $Q_{\infty}$ such that $f_!^{R_\infty}(V)_0=\C^2$, and all other vertices are $0$.  The endomorphism space of this representation, however, is $\Hom_{Q_\infty}(f_!^{R_\infty}(V), f_!^{R_\infty}(V)) \cong \Hom(\C^2,\C^2) \cong \Mat_{2 \times 2}(\C)$.  Since the dimension of $\Hom_{Q_\infty}(f_!^{R_\infty}(V),f_!^{R_\infty}(V))$ is greater than the dimension of $\Hom_{Q_{\infty \times \infty}}(V,V)$, the induced functor $f_{!VV}^{R_\infty}$ is not surjective, and hence $f_!^{R_\infty}$ is not full.
\end{example}

\begin{example}
Next, consider the representation $V \in \rep(Q_{\infty}, R_{\infty})$ given by $(V(i), V(\rho_i), V(\bar \rho_i))= (\C, \lambda, 1)$ for all $i \in \Z$, where $\lambda \in \C$ is nonzero.  Suppose $V \cong f_!^{R_\infty}(U)$ for some $U \in \rep(Q_{\infty \times \infty}, R_{\infty \times \infty})$.  Recall that the vertical maps $U(\rho^{ij}_2)$ of the representation $U$ correspond to the leftward maps $V(\bar \rho_i)$ of $V$.  Since each $V(i)$ is one-dimensional, and each $V(i)$ maps to each $V(i-1)$ through the identity map, all nonzero $U(i,j)$ must lie along the same column.  Relabelling if necessary, we may assume it's the first column.  Then we must have $U(1,j) \cong V(j)$ and $U(\rho^{1j}_2) \cong 1$.  A similar argument shows that all nonzero $U(i,j)$ must lie along the first \emph{row}, with $U(i,1) \cong V(i)$ and $U(\rho_1^{i1}) \cong \lambda$.  Clearly, no such $U$ exists, and hence $f_!^{R_\infty}$ is not essentially surjective.
\end{example}

We will now consider an example which shows that the representation theory of the quiver $Q_{\infty\times\infty}$ is at least of tame type.  To do this, we will show that there exists a family of pairwise nonisomorphic indecomposable representations of $Q_{\infty\times\infty}$ which depend upon a continuous parameter.

\begin{example}
  Let $V_\lambda \in \Rep(Q_{\infty\times\infty},R_{\infty\times\infty})$ denote the following representation, where all vector spaces and maps not displayed are assumed to be zero.
  \begin{equation*}
  \begin{CD}
 \C @>1>> \C @. \\
 @A1AA @AA(1\ 1)A @. \\
 \C @>>(1\ 0)^T> \C^2 @> (1\ \lambda) >> \C \\
 @. @A (0\ 1)^T AA @AA1A \\
  @. \C @>>1> \C
  \end{CD}
  \end{equation*}
  We will assume that $V_\lambda(0,0)=\C^2$, and label all other vertices accordingly.  First we will show that $V_\lambda$ is indecomposable for all $\lambda \in \C$.  Suppose $V_\lambda=U\oplus W$.  Then we may assume $U(-1,1)=\C$.  Since $V_\lambda (\rho_1^{-1,1})=V_\lambda(\rho_2^{-1,0})=1$, we must have $U(0,1)=U(-1,0)=\C$.  We then have $(1\ \lambda)(1\ 0)^T(\C)=\subseteq U(0,1)$, and it follows that $U(0,1)=\C$.  But then $U(0,-1)=U(1,-1)=\C$ since $V_\lambda(\rho_1^{0,-1})=V_\lambda(\rho_2^{1,-1})=1$.  Finally, we have $(1\ 0)^T(\C)\subseteq U(0,0)$ and $(0\ 1)^T(\C)\subseteq U(0,0)$, and we conclude that $U=V_\lambda$, so $V_\lambda$ is indecomposable.  

  Now suppose $V_\lambda \cong V_\mu$.  Then there exists an invertible 2$\times$2 matrix $A$ and nonzero complex numbers $z_1,z_2,z_3,z_4 \in \C$ such that the following equations hold:
  \begin{align*}
    (1\ 0)^T z_1&=A(1\ 0)^T, \\
    (0\ 1)^T z_2&=A(0\ 1)^T, \\
    z_3 (1\ 1)&=(1\ 1)A, \\
    (1\ \mu)A&=z_4(1\ \lambda).
  \end{align*}
  The first two equations insist that $A$ is a diagonal matrix.  The third equation then implies that it is a scalar matrix, and then the fourth equation forces $\lambda=\mu$.  Hence when $\lambda\neq \mu$, $V_\lambda$ and $V_\mu$ are nonisomorphic.  One can show in a similar manner that the images of these representations under the functor $f_!^{R_\infty}$ gives a family of indecomposable pairwise nonisomorphic representations in $\Rep(Q_{\infty},R_\infty)$.
\end{example}

While we have seen that it is neither full nor essentially surjective, the functor $f^{R_\infty}_!$ can still be used to study the category $\Rep(Q_{\infty\times\infty},R_{\infty\times\infty})$.  First, we note that the group $\Z$ acts on the vertices and arrows of $Q_{\infty\times\infty}$ via
\begin{equation*}
z\cdot (i,j)=(i+z,j+z), \quad z\cdot \rho_1^{ij}=\rho_1^{(i+z)(j+z)},\quad z\cdot\rho_2^{ij}=\rho_2^{(i+z)(j+z)}.
\end{equation*}
Given any representation $V\in \Rep(Q_{\infty\times\infty},R_{\infty\times\infty})$, we denote by $V^{(z)}$ the representation obtained from $V$ by twisting with the action of $\Z$.  More precisely, the representation $V^{(z)}$ is defined by
\begin{equation*}
  V^{(z)}(i,j)=V(i+z,j+z), \quad V^{(z)}(\rho_1^{ij})=V(\rho_1^{(i+z)(j+z)}), \quad V^{(z)}(\rho_2^{ij})=V(\rho_2^{(i+z)(j+z)}).
\end{equation*}
\begin{lem}\label{surjon5}
Let $a,b \in \Z$ be integers such that $0<b-a\leq 4$.  Let $V \in \Rep(Q_{\infty},R_\infty)$ be a finite dimensional representation such that $V(k)=0$ whenever $k<a$ or $k>b$.  Then $V$ is isomorphic to $f_!^{R_\infty}(U)$ for some $U\in \Rep(Q_{\infty\times\infty},R_{\infty\times\infty})$, which is unique up to translation $U \mapsto U^{(z)}$ by the group $\Z$.
\end{lem}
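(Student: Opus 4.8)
The plan is to recast the lemma as a statement about refining the weight grading of $V$, to build the refinement by propagating it across the (at most five) nonzero weight spaces, and to read off uniqueness from the local endomorphism ring of an indecomposable.

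First I would set up the dictionary between lifts and gradings. Since $f_1^{-1}(k)=\{(i,j):i-j=k\}$ is a single orbit of the $\Z$-action $z\cdot(i,j)=(i+z,j+z)$, producing $U$ with $f_!^{R_\infty}(U)\cong V$ is the same as splitting each weight space compatibly with the arrows. Writing $W=\bigoplus_k V(k)$, $R=\bigoplus_k V(\rho_k)$ and $L=\bigoplus_k V(\bar\rho_k)$, the relations $R_\infty$ say exactly that $RL=LR$. A lift then amounts to a diagonalizable operator $N$ on $W$, with integer eigenvalues, preserving each $V(k)$ and satisfying $[N,R]=0$ and $[N,L]=L$: setting $U(k+j,j)$ to be the degree-$j$ part of $V(k)$ and taking the induced restrictions of $R$ and $L$ as the horizontal and vertical arrows yields a representation satisfying $R_{\infty\times\infty}$ with $f_!^{R_\infty}(U)\cong V$ (this is immediate from the formula for $f_!$ in Lemma \ref{pathlemma}). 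Under this dictionary the translation $U\mapsto U^{(z)}$ corresponds to $N\mapsto N+z$, so the lemma becomes: such an $N$ exists and, for indecomposable $V$, is unique up to adding an integer. Because $f_!$ is additive, I would reduce existence to the case of indecomposable $V$ and reassemble by direct sums.

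For existence I would construct $N$ by propagating the grading across the weights $a,a+1,\dots,b$. Normalizing $N$ to be $0$ on $V(a)$ (which fixes the residual translation), the passage from $V(k-1)$ to $V(k)$ requires a grading on $V(k)$ making $V(\rho_{k-1})$ homogeneous of degree $0$ and $V(\bar\rho_k)$ homogeneous of degree $+1$ relative to the grading already chosen; the relation $V(\bar\rho_{k+1})V(\rho_k)=V(\rho_{k-1})V(\bar\rho_k)$ is precisely what makes these two requirements mutually consistent. The essential point, and the main obstacle, is that the resulting global system of constraints is solvable over $\Z$, i.e.\ that $V$ is gradable; this is where the hypothesis $b-a\le 4$ is indispensable. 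A representation supported on at most five consecutive vertices is a finite-dimensional module over the preprojective algebra of a quiver of type $A_n$ with $n\le 5$, which is of finite or tame---in particular non-wild---representation type, and for such algebras every finite-dimensional module lifts along the Galois $\Z$-covering $f$. (For six or more consecutive vertices the preprojective algebra is wild and admits indecomposables carrying no compatible grading, so both existence and the clean fibre description break down.) I would either invoke the covering-theoretic density of the pushdown in the non-wild case or, staying self-contained, verify solvability directly using the explicit classification of indecomposables available when $n\le 5$.

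Finally, for uniqueness I would argue that for indecomposable $V$ the grading $N$ is unique up to a global shift. Given two lifts, the corresponding $(V,N)$ and $(V,N')$ are indecomposable objects in the category of $\Z$-graded modules over the preprojective algebra which become isomorphic after forgetting the grading. Since $\End(V)$ is local, any weight-preserving operator commuting with both $R$ and $L$ that is diagonalizable with integer eigenvalues is forced to be scalar---its eigenspace projections are idempotents of a local ring---so the two gradings differ by an integer shift, giving $U'\cong U^{(z)}$. This handles the indecomposable case to which the general statement reduces, and it is exactly the form in which the lemma feeds into Corollary \ref{irratreps}.
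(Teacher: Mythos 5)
Your overall architecture coincides with the paper's: reduce to indecomposable $V$ by additivity of $f_!^{R_\infty}$, view $V$ as a finite-dimensional module over the preprojective algebra of type $A_n$ with $n\le 5$, and invoke a gradability-up-to-shift result for such algebras; your dictionary between lifts $U$ and grading operators $N$ with $[N,R]=0$, $[N,L]=L$ is the right way to phrase what must be proved. The genuine gap is in how you propose to justify existence. You assert that non-wildness is the operative hypothesis and that one can ``invoke the covering-theoretic density of the pushdown in the non-wild case.'' There is no such theorem, and the assertion is false for tame algebras in general: take the tame string algebra $\C\langle x,y\rangle/(x^2,y^2,xy,yx)$ with the Galois $\Z$-covering determined by $\deg x=1$, $\deg y=-1$, and the indecomposable module $M=\C^2$ on which $x$ acts by $e=\left(\begin{smallmatrix}0&0\\1&0\end{smallmatrix}\right)$ and $y$ by $\lambda e$ with $\lambda\neq 0$; a grading operator would have to satisfy $[N,e]=e$ and $[N,\lambda e]=-\lambda e$ simultaneously, which is impossible, so $M$ is not a pushdown. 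What general covering theory (Gabriel, Dowbor--Skowro\'nski) does give is density when the base is representation-finite, which handles $n\le 4$, i.e.\ $b-a\le 3$; but the boundary case $b-a=4$, where the preprojective algebra of type $A_5$ is tame, is exactly the case where a result \emph{specific to preprojective algebras} is required. That result is precisely what the paper cites (Lemma 9.1 of Gei{\ss}--Leclerc--Schr\"oer, stated for type $A_n$, $2\le n\le 5$), and your fallback of ``verifying solvability directly from the classification'' would amount to reproving it: for $n=5$ the classification involves one-parameter families, so this is not a routine finite check.

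A secondary, repairable flaw is in your uniqueness argument. You apply the ``a diagonalizable element of a local ring is scalar'' observation to the difference $D=N'-N$, but $N$ and $N'$ need not commute, so $D$ need not be diagonalizable; moreover the conclusion you aim for, $N'=N+z$, is too strong (conjugating $N$ by a unipotent automorphism of $V$ produces a grading operator that is not a translate of $N$ but gives an isomorphic lift). The statement actually needed is that the two lifts are isomorphic up to translation, and the correct argument is the graded-components one: writing $P_j$, $P'_j$ for the eigenprojections of $N$, $N'$, set $\varphi_d=\sum_j P'_{j+d}P_j$; each $\varphi_d$ commutes with $R$ and with $L$ (using $P_{j+1}L=LP_j$ and $P'_{j+1}L=LP'_j$), hence lies in $\End(V)$, and $\sum_d\varphi_d=\id_V$. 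Since $\End(V)$ is local, some $\varphi_d$ is invertible, and it carries the $N$-grading to the $N'$-grading shifted by $d$, giving $U'\cong U^{(d)}$.
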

\begin{proof}
Since the functor $f_!^{R_\infty}$ is additive, we may assume that $V$ is indecomposable.  Any indecomposable representation $V\in \Rep_{\mathrm{fd}}(Q_\infty, R_\infty)$ such that $V(k)=0$ whenever $k<a$ or $k>b$ is supported on at most 5 vertices, and hence may be thought of as a representation of the preprojective algebra of the quiver of type $A_5$.  The lemma then follows from \cite[Lemma 9.1]{GLS}, which states a similar result in the case of preprojective algebras of type $A_n$ for $2\leq n\leq 5$.
\end{proof}
The translation $V \mapsto V^{(z)}$ by $\Z$ on representations of $Q_{\infty\times\infty}$ induces an action of $\Z$ on the collection of isomorphism classes of representations of $U_\mu$ admitting a weight space decomposition via the equivalences $\Rep(Q_{\infty\times\infty},R_{\infty\times\infty})\cong \wtrep_\gamma(U_\mu)\cong \widetilde{U}_\mu\Md$.  We then have the following result:
\begin{prop}
For $a,b \in \Z$ with $0\leq b-a\leq 3$, there are a finite number of $\Z$-orbits of isomorphism classes of indecomposable $\widetilde U_\mu$-modules $V$ such that $V_{ij}=0$ whenever $i-j<a$ or $i-j>b$.
\end{prop}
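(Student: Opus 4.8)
The plan is to transport the statement to the quiver $Q_{\infty\times\infty}$ and then push it down to $Q_\infty$ along the covering morphism $f$, exploiting that the relevant piece of $Q_\infty$ is representation-finite. First I would use the equivalence $\widetilde U_\mu\Md \cong \Rep(Q_{\infty\times\infty},R_{\infty\times\infty})$ to restate the claim: the hypothesis $V_{ij}=0$ for $i-j<a$ or $i-j>b$ says exactly that the associated representation of $Q_{\infty\times\infty}$ is supported on the diagonal band $\mathcal B=\{(i,j): a\le i-j\le b\}$, and the $\Z$-action $V\mapsto V^{(z)}$ is translation $(i,j)\mapsto(i+z,j+z)$ along the main diagonal, which preserves $\mathcal B$. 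Since $f_1(i,j)=i-j$, any such $V$ has $f_!^{R_\infty}(V)$ supported on the vertices $\{a,\dots,b\}$ of $Q_\infty$, that is, on at most $b-a+1\le 4$ consecutive vertices. (The degenerate case $a=b$ is immediate: there both arrow families leave the band, so $V$ carries no maps and its indecomposables are one-dimensional, forming a single $\Z$-orbit.)

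Next I would identify the target category. Restricted to $\{a,\dots,b\}$, the relations $R_\infty$ are precisely the mesh (preprojective) relations, so finite-dimensional representations of $Q_\infty$ supported there are exactly modules over the preprojective algebra $\Pi(A_{b-a+1})$. Because $b-a+1\le 4$, this algebra is of finite representation type; hence there are only finitely many indecomposable such modules $W_1,\dots,W_N$, all finite-dimensional, and (being representation-finite) every module supported on those vertices is a direct sum of copies of the $W_i$. This is the step that forces the hypothesis $b-a\le 3$: at $b-a=4$ one would reach $\Pi(A_5)$, which is only tame, and the count of orbits would no longer be finite.

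The heart of the argument is the push-down correspondence. Using that $f^*$ is right adjoint to $f_!$ (Theorem~\ref{adjointthm}), one computes directly that $f^*f_!(V)\cong\bigoplus_{z\in\Z}V^{(z)}$, and therefore
\begin{equation*}
\End_{Q_\infty}\big(f_!(V)\big)\cong\Hom_{Q_{\infty\times\infty}}\Big(V,\bigoplus_{z\in\Z}V^{(z)}\Big)\cong\bigoplus_{z\in\Z}\Hom_{Q_{\infty\times\infty}}\big(V,V^{(z)}\big).
\end{equation*}
For a finite-dimensional indecomposable $V$ the support is finite, so only finitely many translates $V^{(z)}$ meet it and the right-hand side is a finite-dimensional $\Z$-graded algebra whose degree-zero part $\End(V)$ is local (Fitting); the nonzero-degree components are nilpotent, so $\End(f_!(V))$ is local and $f_!(V)$ is indecomposable, hence one of the $W_i$. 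By the uniqueness clause of Lemma~\ref{surjon5} the $\Z$-orbit of $V$ is then determined by this $W_i$; conversely each $W_i$ equals $f_!(U_i)$ for some $U_i$ that is indecomposable (as $f_!$ is faithful) and unique up to $\Z$-translation. This produces a bijection between $\Z$-orbits of \emph{finite-dimensional} indecomposables supported in $\mathcal B$ and the finite set $\{W_1,\dots,W_N\}$, which gives the desired finiteness for finite-dimensional modules.

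The main obstacle is that the Proposition ranges over all modules, so one must still show that there are no infinite-dimensional indecomposables supported in $\mathcal B$ (equivalently, that they contribute no new orbits). Here $f_!(V)$ is a module over the representation-finite algebra $\Pi(A_{b-a+1})$ and so splits as a direct sum of the finite-dimensional $W_i$; feeding this back through $f^*f_!(V)\cong\bigoplus_z V^{(z)}$ together with the split unit $V\hookrightarrow f^*f_!(V)$ should force $V$ to be finite-dimensional. Conceptually this is the assertion that the Galois $\Z$-covering $f$ of a representation-finite algebra is locally representation-finite, all of whose indecomposables are finite-dimensional and lie in a single orbit over an indecomposable downstairs. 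Carrying out this last finite-dimensionality reduction cleanly from Lemma~\ref{surjon5} and the identity above, rather than quoting the general covering theorem, is the delicate point I would need to treat with care.
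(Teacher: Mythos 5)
Your proposal follows the same architecture as the paper's proof: pass through the equivalence $\widetilde U_\mu\Md\cong\Rep(Q_{\infty\times\infty},R_{\infty\times\infty})$, push down along the covering morphism $f$, invoke representation-finiteness of the preprojective algebra on at most four consecutive vertices (the paper cites \cite[Theorem 4.3]{SavEuc} for precisely this), and use the uniqueness clause of Lemma~\ref{surjon5} to see that the $\Z$-orbit of $V$ is determined by the isomorphism class of $f_!^{R_\infty}(V)$. (Your separate treatment of the degenerate case $a=b$, which Lemma~\ref{surjon5} formally excludes, is a point in your favor.) The one step you argue differently is the preservation of indecomposability by $f_!$. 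The paper gets this for free from Lemma~\ref{surjon5} itself: decompose $f_!^{R_\infty}(V)=\bigoplus_i W_i$ into indecomposables, lift each $W_i$ to some $U_i$ by the existence clause, use that $f_!$ is additive, and conclude from the uniqueness clause that $V\cong\bigl(\bigoplus_i U_i\bigr)^{(z)}$ for some $z$; indecomposability of $V$ then forces a single summand, so $f_!^{R_\infty}(V)$ is itself indecomposable. Your route via the $\Z$-graded algebra $\End\bigl(f_!(V)\bigr)\cong\bigoplus_z\Hom\bigl(V,V^{(z)}\bigr)$ is the standard push-down lemma of Galois covering theory, but as written it has a gap: it is true that each \emph{homogeneous} element of nonzero degree is nilpotent (its powers land in $\Hom\bigl(V,V^{(nz)}\bigr)=0$ for large $n$, by finiteness of support), but ``degree-zero part local plus nonzero-degree parts nilpotent'' does not imply locality, because composites of nonzero-degree elements can return to degree zero. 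The missing ingredient is freeness of the $\Z$-action: any composite $V\to V^{(z)}\to V$ with $z\neq 0$ must be non-invertible, since otherwise $V$ would be a direct summand of the indecomposable $V^{(z)}$, giving $V\cong V^{(z)}$, which is impossible because translation moves the finite nonempty support. With that added, all ``return'' contributions lie in the radical of $\End(V)$ and locality follows; so your argument is completable, but it is strictly more work than the paper needs.

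On your final concern about infinite-dimensional modules: you are right that Lemma~\ref{surjon5} and \cite[Theorem 4.3]{SavEuc} are statements about finite-dimensional representations, so neither your argument nor the paper's covers arbitrary modules as literally quantified in the statement. The paper's own proof is silent on this point --- it implicitly reads the proposition within the finite-dimensional theory --- and does not carry out the reduction you sketch (local representation-finiteness of the covering, or an Auslander-type theorem that every module over a representation-finite algebra is a direct sum of finite-dimensional indecomposables). So this is a genuine subtlety you have correctly identified, but it is a gap shared with the published proof rather than a defect of your approach relative to it; modulo that shared caveat and the radical argument above, your proof is correct and is essentially the paper's.
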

\begin{proof}
By \cite[Theorem 4.3]{SavEuc}, there are a finite number of isomorphism classes of indecomposable modules $V \in \Rep(Q_{\infty},R_\infty)$ such that $V(k)=0$ for $k<a$ or $k>b$.  The proposition then follows from the equivalence $\Rep(Q_{\infty\times\infty}, R_{\infty\times\infty})\cong \widetilde U_\mu \Md$ and Lemma \ref{surjon5}.
\end{proof}
\begin{cor}\label{irratreps}
Let $A$ be a finite subset of $\Z$ with the property that $A$ does not contain any five consecutive integers.  Then there are a finite number of $\Z$-orbits of isomorphism classes of indecomposable $\widetilde U_\mu$-modules $V$ such that $V_{ij}=0$ whenever $i-j \notin A$.
\end{cor}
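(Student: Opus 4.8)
The plan is to deduce the corollary from the preceding Proposition by breaking $A$ into its maximal runs of consecutive integers and showing that every indecomposable is supported on a single run. The key observation is that, viewing a $\widetilde U_\mu$-module $V$ as a representation of $Q_{\infty\times\infty}$ under the equivalence $\widetilde U_\mu\Md\cong\Rep(Q_{\infty\times\infty},R_{\infty\times\infty})$ (so that $V(i,j)=V_{ij}$), every arrow changes the diagonal coordinate $i-j$ by exactly $\pm 1$: the arrow $\rho_1^{ij}$ sends $(i,j)$ to $(i+1,j)$, raising $i-j$ by one, and $\rho_2^{ij}$ sends $(i,j)$ to $(i,j+1)$, lowering $i-j$ by one. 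Hence a module can only link weight spaces whose $i-j$ values are adjacent.

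First I would write $A=B_1\sqcup\cdots\sqcup B_r$ as the disjoint union of its maximal blocks of consecutive integers, so that each $B_\ell=\{a_\ell,a_\ell+1,\dots,b_\ell\}$ and consecutive blocks are separated by at least one integer not in $A$. Because $A$ contains no five consecutive integers, every block satisfies $0\le b_\ell-a_\ell\le 3$. Now suppose $V$ is indecomposable with $V_{ij}=0$ whenever $i-j\notin A$, and suppose $A$ splits as $A=A'\sqcup A''$ with an integer $c\notin A$ lying strictly between $A'$ and $A''$. Setting $V'=\bigoplus_{i-j\in A'}V_{ij}$ and $V''=\bigoplus_{i-j\in A''}V_{ij}$, I would verify that both are subrepresentations: an arrow out of $V'$ either stays in $V'$ or is forced to reach the diagonal $c\notin A$, where the target space is zero, and symmetrically for $V''$. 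Thus $V=V'\oplus V''$, and indecomposability kills one summand. Splitting at every gap in turn shows that $V$ is supported on a single block $B_\ell$.

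Finally I would observe that the $\Z$-action is compatible with this decomposition: since $z\cdot(i,j)=(i+z,j+z)$ fixes the coordinate $i-j$, the twist $V^{(z)}$ has the same diagonal support as $V$, so $\Z$ permutes the indecomposables supported on a fixed block among themselves. Consequently the $\Z$-orbits of indecomposables supported on $A$ are partitioned according to their supporting block. For each $\ell$ the condition ``$V_{ij}=0$ for $i-j\notin B_\ell$'' coincides with ``$V_{ij}=0$ whenever $i-j<a_\ell$ or $i-j>b_\ell$'', so the Proposition, applied with $a=a_\ell$ and $b=b_\ell$ (legitimate since $0\le b_\ell-a_\ell\le 3$), yields finitely many $\Z$-orbits for that block. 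Summing over the finitely many blocks $B_1,\dots,B_r$ gives the claim.

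The main obstacle is the decomposition step: one must argue carefully that a single gap in $A$ already forces every representation supported on $A$ to split as a direct sum, and this is precisely where the fact that arrows connect only adjacent diagonals is indispensable. The remaining parts --- counting blocks, checking that the $\Z$-action fixes $i-j$, and invoking the Proposition blockwise --- are routine bookkeeping.
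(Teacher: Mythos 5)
Your proof is correct and takes exactly the approach the paper intends: the corollary is stated there without proof as an immediate consequence of the preceding Proposition, and your argument --- splitting $A$ into maximal blocks of at most four consecutive integers, using the fact that arrows of $Q_{\infty\times\infty}$ change the diagonal $i-j$ by $\pm 1$ to force any indecomposable to be supported on a single block, noting the $\Z$-action preserves diagonals, and applying the Proposition blockwise --- is precisely the bookkeeping the paper leaves to the reader. No gaps.
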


\bibliography{thesisbib}
\bibliographystyle{amsplain}

\end{document}